\newtheorem{thm}{Theorem}[section]
\newtheorem{cor}[thm]{Corollary}
\newtheorem{prop}[thm]{Proposition}
\newtheorem{lem}[thm]{Lemma}
\newtheorem*{claim}{Claim}
\theoremstyle{definition}
\newtheorem{exa}[thm]{Example}
\newtheorem*{xrem}{Remark}
\numberwithin{equation}{section}
\def\eq#1{{\rm(\ref{#1})}}
\def\Eq#1#2{\ifthenelse{\equal{#1}{*}}
  {\begin{equation*}\begin{aligned}[]#2\end{aligned}\end{equation*}}
  {\begin{equation}\begin{aligned}[]\label{#1}#2\end{aligned}\end{equation}}}
\def\A{\mathscr{A}}
\def\B{\mathscr{B}}
\def\D{\mathscr{D}}
\def\E{\mathscr{E}}
\def\G{\mathscr{G}}
\def\M{\mathscr{M}}
\newcommand\R{\mathbb{R}}
\newcommand\N{\mathbb{N}}
\newcommand\WD{\mathscr{W}(D)}
\newcommand\WS{\mathscr{W}(S)}
\DeclareMathOperator{\sign}{sign}
\DeclareMathOperator{\conv}{conv}
\DeclareMathOperator{\cone}{cone}
\title
{Decision making via generalized Bajraktarevi\'c means}
\author{Zsolt P\'ales}
\address{Institute of Mathematics, University of Debrecen, Pf.\ 400, 4002 Debrecen, Hungary}
\email{pales@science.unideb.hu}
\author{Pawe\l{} Pasteczka}
\address{Institute of Mathematics, Pedagogical University of Krak\'ow, Podchor\k{a}\.{z}ych str 2, 30-084 Krak\'ow, Poland}
\email{pawel.pasteczka@up.krakow.pl}
\thanks{The research of the first author was supported by the K-134191 NKFIH Grant and by the 2019-2.1.11-T\'ET-2019-00049, EFOP-3.6.1-16-2016-00022, EFOP-3.6.2-16-2017-00015 projects. The last two projects are co-financed by the European Union and the European Social Fund.}
\keywords{Decision making function, aggregation function, effort functions, synergy, generalized Bajraktarevi\'c mean, equality problem}
\subjclass[2010]{Primary: 90B50; Secondary: 26D15, 26E60, 39B62}
\begin{document}
\begin{abstract}
We define decision-making functions which arise from studying the multidimensional generalization of the weighted Bajraktarević means. It allows a nonlinear approach to optimization problems.

These functions admit several interesting (from the point of view of decision-making) properties, for example, delegativity (which states that each subgroup of decision-makers can aggregate their decisions and efforts), casuativity (each decision affects the final outcome except two trivial cases) and convexity-type properties. 

Beyond establishing the most important properties of such means, we solve their equality problem, we introduce a notion of synergy and characterize the null-synergy decision-making functions of this type.
\end{abstract}
\maketitle

\section{Introduction}

In game theory the mathematical models for decision-making create challenging and important problems binding computer science, economy, mathematics and psychology. In one of such models there is a set $D$ of all possible decisions and a finite number $n$ of players (decision makers) with their individual nonnegative weights (efforts) and decisions. Obviously, the sum of all weights cannot be zero and thus (as a vector) it belongs to the set
\Eq{*}{
W_n:=[0,+\infty)^n \setminus \{(0,\dots,0)\}.
}
The issue is to aggregate all the individual decisions with the corresponding weights to one (common) decision. For this purpose, we need the notion of an \emph{aggregation function} on $D$, which is defined to be a map
\Eq{*}{
\M\colon \WD \to D,
\qquad\mbox{where}\qquad\WD:=\bigcup_{n=1}^\infty D^n \times W_n.
}

For instance, when the set $D$ of decisions is a convex subset of a linear space $X$, then the weighted arithmetic mean $\A$, which is defined as
\Eq{*}{
  \A(x,\lambda)
  :=\frac{\lambda_1x_1+\dots+\lambda_nx_n}{\lambda_1+\dots+\lambda_n}
  \qquad(x=(x_1,\dots,x_n)\in D^n,\, 
     \lambda=(\lambda_1,\dots,\lambda_n)\in W_n),
}
is a well-known aggregation function. Further examples for an aggregation function are as follows (see for example \cite{Hen98}):
\begin{enumerate}
\item The \emph{Primacy Effect} $\D_{PE} \colon \WD \to D$ is defined by
\Eq{*}{
\D_{PE}(x,\lambda):=\big\{x_i\:\big|\ \lambda_i \ne 0 \text{ and }\lambda_j =0\text{ for all } j\in\{1,\dots,i-1\} \big\}.
}
\item The \emph{Recency Effect} $\D_{RE} \colon \WD \to D$ is defined by
\Eq{*}{
\D_{RE}(x,\lambda):=\big\{x_i\:\big|\ \lambda_i \ne 0 \text{ and }\lambda_j =0\text{ for all }j\in\{i+1,\dots,n\}\big\}.
}
 \item The \emph{First Dominating Decision} $\D_{FDD} \colon \WD \to D$ is given by
\Eq{*}{
\D_{FDD}(x,\lambda):=\big\{x_i\:\big|\ \lambda_i =\max(\lambda) \text{ and }\lambda_j <\lambda_i \text{ for all }j\in\{1,\dots,i-1\}\big\}.
}
\item The \emph{First Dominant} $\D_{FD} \colon \WD \to D$ is given by
\Eq{*}{
\D_{FD}(x,\lambda):=\D_{FDD}(x,\lambda^*),
\qquad\mbox{where }\lambda_i^*:=\sum_{j\colon x_j=x_i} \lambda_j.
}
\end{enumerate}
All functions listed in (1)--(4) are reflexive, eliminative, nullhomogeneous in the weights but not symmetric (see the relevant definitions below). Furthermore, they are all \emph{conservative} (or \emph{selective}), which means that the aggregated decision is always one of the individual ones. For a detailed study of (nonweighted) conservative aggregation functions, we refer the reader to the recent study by Couceiro--Devillet--Marichal \cite{CouDevMar18} and Devillet--Kiss--Marichal \cite{DevKisMar19}.

In many settings, $D$ is an infinite set which often refers to the position of the players in a space before the game. An aggregation function unites the positions of all the players into one. An individual nonnegative weight measures the impact of the decision of the corresponding players to the final outcome. In order to introduce plausible and natural properties for aggregation functions, we introduce the concept of decision-making functions on an arbitrary set $D$. For this aim, we adopt the notion of weighted means (which were defined on an interval) from the paper \cite{PalPas18b} to our more general setting. An aggregation function $\M \colon \WD \to D$ is called a \emph{decision-making function (on $D$)} if it satisfies the following five conditions: 
 \begin{enumerate}[(i)]
  \item $\M$ is \emph{reflexive}: For all $x \in D$ and $\lambda \in \R_+$, 
  we have $\M(x,\lambda)=x$.
  \item $\M$ is \emph{nullhomogeneous in the weights}: For all $n \in \N$, $(x_1,\dots,x_n)\in D^n$, $(\lambda_1,\dots,\lambda_n)\in W_n$, and $t \in \R_+$, we have  
  \Eq{*}{\qquad
   \M\big((x_1,\dots,x_n),(t\lambda_1,\dots,t\lambda_n)\big)
   =\M\big((x_1,\dots,x_n),(\lambda_1,\dots,\lambda_n)\big).
  }
  \item $\M$ is \emph{symmetric}: For all $n \in \N$, $(x_1,\dots,x_n)\in D^n$, $(\lambda_1,\dots,\lambda_n)\in W_n$ and for all permutations $\sigma$ of $\{1,\dots,n\}$, we have 
  \Eq{*}{\qquad
  \M\big((x_{\sigma(1)},\dots,x_{\sigma(n)}),
       (\lambda_{\sigma(1)},\dots,\lambda_{\sigma(n)})\big)
    =\M\big((x_1,\dots,x_n),(\lambda_1,\dots,\lambda_n)\big).
  }
  \item $\M$ is \emph{eliminative or neglective}: For all $n\geq 2$, $(x_1,\dots,x_n)\in D^n$ and $(\lambda_1,\dots,\lambda_n)\in W_n$ with $\lambda_1=0$, we have
  \Eq{*}{\qquad
  \M\big((x_1,\dots,x_n),(\lambda_1,\dots,\lambda_n)\big)
  =\M\big((x_2,\dots,x_n),(\lambda_2,\dots,\lambda_n)\big).
  }
   \item $\M$ is \emph{reductive}: For all $n\geq 2$, $(x_1,\dots,x_n)\in D^n$ with $x_1=x_2$ and $(\lambda_1,\dots,\lambda_n)\in W_n$, we have
  \Eq{*}{\qquad
  \M\big((x_1,\dots,x_n),(\lambda_1,\dots,\lambda_n)\big)
  =\M\big((x_2,x_3,\dots,x_n),(\lambda_1+\lambda_2,\lambda_3,\dots,\lambda_n)\big).
  }
\end{enumerate}

We also introduce the concept of the effort function, which is aiming to aggregate the individual weights (efforts) into one positive number: A function $\E \colon \WD \to \R_+$ is called an \emph{effort function (on $D$)} if it satisfies the following five conditions: 
 \begin{enumerate}[(i)]
  \item $\E$ is \emph{reflexive in the weights}: For all $x \in D$ and $\lambda \in \R_+$, we have $\E(x,\lambda)=\lambda$.
  \item $\E$ is \emph{homogeneous in the weights}: For all $n \in \N$, $(x_1,\dots,x_n)\in D^n$, $(\lambda_1,\dots,\lambda_n)\in W_n$, and $t \in \R_+$, we have  
  \Eq{*}{\qquad
   \E\big((x_1,\dots,x_n),(t\lambda_1,\dots,t\lambda_n)\big)
   =t\E\big((x_1,\dots,x_n),(\lambda_1,\dots,\lambda_n)\big).
  }
  \item $\E$ is \emph{symmetric}.
  \item $\E$ is \emph{eliminative or neglective}. 
  \item $\E$ is \emph{reductive}. 
\end{enumerate}
One can easily see that the map $\alpha \colon \WD \to \R_+$ given by
\Eq{*}{
  \alpha(x,\lambda)
  :=\lambda_1+\dots+\lambda_n
  \qquad(x\in D^n,\, 
     \lambda=(\lambda_1,\dots,\lambda_n)\in W_n),  
}
is an effort function, which we call the \emph{arithmetic effort function}.

The symmetry property of decision-making and effort functions means that there is no distinction between players and also their order is irrelevant for the decision. This property has a far-reaching consequences especially for conservative functions, as it determines the anty-symmetric preference relation on $D$ by $x\succ y :\!\!\iff x=\M((x,y),(1,1))$ (cf.\ \cite{Dev19} for details).
It was proved experimentally that this relation cannot be generalized to multivariable choice; this phenomena is known as a decoy effect (see for example Huber--Payne--Puto \cite{HubPayPut82}). 

The nullhomogeneity of $\M$ and the homogeneity of $\E$ in the weights states that if the weights are scaled by the same factor, then the decision remains unchanged and the effort is scaled by the same factor. The meaning of the elimination principle is that players with zero weight do not affect the decision and the effort. One can easily check that the arithmetic mean is a decision-making function over any convex subset of a linear space.

We introduce now an aggregation-type property which will play a significant role in the sequel. We say that a decision-making function $\M$ on $D$ is \emph{delegative} (admits the delegation principle or partial aggregation principle) if, for all $(y,\mu)\in \WD$, there exists a pair $(y_0,\mu_0) \in D \times \R_+$ such that
\Eq{ts}{
  \M((x,y),(\lambda,\mu))=\M((x,y_0),(\lambda,\mu_0))  
  \qquad((x,\lambda)\in \WD).
}
Analogously, we can speak about the delgativity of an effort function $\E$ on $D$ which means that, for all $(y,\mu)\in \WD$, there exists a pair $(y_0,\mu_0) \in D \times \R_+$ such that
\Eq{ts+}{
  \E((x,y),(\lambda,\mu))=\E((x,y_0),(\lambda,\mu_0))  
  \qquad((x,\lambda)\in \WD).
}

\begin{lem} Let $(y,\mu)\in \WD$ be fixed. If $\M\colon\WD \to D$ is a delegative decision-making function, then \eq{ts} holds if and only if $y_0=\M(y,\mu)$. Analogously, if $\E\colon\WD \to \R_+$ is a delegative effort function, then \eq{ts+} holds if and only if $\mu_0=\E(y,\mu)$.
\end{lem}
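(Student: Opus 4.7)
The plan is to substitute two carefully chosen pairs $(x,\lambda)$ into the identities \eq{ts} and \eq{ts+} and then exploit the symmetry of $\M$ (respectively $\E$) to equate the outputs. The same template will work for both halves of the lemma: nullhomogeneity of $\M$ will play the role in the first half that homogeneity of $\E$ plays in the second. In each case the converse direction drops out for free --- delegativity hands us \emph{some} pair $(y_0^{*},\mu_0^{*})$ satisfying the identity, and the forward direction then forces its first (resp.\ second) coordinate to equal $\M(y,\mu)$ (resp.\ $\E(y,\mu)$), so any pair with the prescribed first (resp.\ second) coordinate works.

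For the $\M$-part I will first take $(x,\lambda):=(y_0,\mu_0)$ in \eq{ts}. The right-hand side becomes $\M((y_0,y_0),(\mu_0,\mu_0))$, which collapses by reductive followed by reflexivity to $y_0$, so $\M((y_0,y),(\mu_0,\mu))=y_0$. Then I will take the far more useful substitution $(x,\lambda):=(y,\mu)$ itself. The left-hand side is $\M((y,y),(\mu,\mu))$; using symmetry to interleave the two copies of $y$ into $(y_1,y_1,y_2,y_2,\dots,y_m,y_m)$ with weights $(\mu_1,\mu_1,\dots,\mu_m,\mu_m)$, iterated reductive collapses each adjacent equal pair to give $(y_1,\dots,y_m)$ with weights $(2\mu_1,\dots,2\mu_m)$, and nullhomogeneity then rescales back to $(\mu_1,\dots,\mu_m)$. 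Hence the left-hand side is precisely $\M(y,\mu)$. The right-hand side is $\M((y,y_0),(\mu,\mu_0))$; symmetry identifies it with $\M((y_0,y),(\mu_0,\mu))$, which is $y_0$ by the first substitution. Comparing the two, $\M(y,\mu)=y_0$.

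For the $\E$-part the same two substitutions go through verbatim. The first yields $\E((y_0,y),(\mu_0,\mu))=\E(y_0,2\mu_0)=2\mu_0$ by reductive and reflexivity in the weights. The second gives $\E((y,y),(\mu,\mu))$ on the left, which by symmetry and iterated reductive becomes $\E(y,2\mu)$, and the homogeneity of $\E$ turns this into $2\E(y,\mu)$. On the right, symmetry again identifies $\E((y,y_0),(\mu,\mu_0))$ with $\E((y_0,y),(\mu_0,\mu))=2\mu_0$, so $2\E(y,\mu)=2\mu_0$, giving $\E(y,\mu)=\mu_0$.

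The single non-routine observation driving the whole argument is that feeding the group $(y,\mu)$ back into its own delegation identity, combined with a ``doubling-and-rescaling'' move (iterated reductive followed by nullhomogeneity, or by homogeneity in the effort case), converts the left-hand side into the aggregate $\M(y,\mu)$ (respectively $2\E(y,\mu)$). After that step there is no further obstacle; everything else is symmetry and bookkeeping.
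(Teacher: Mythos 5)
Your argument is correct and is essentially the paper's own proof: both apply the delegation identity \eq{ts} twice, once with $(x,\lambda)=(y,\mu)$ and once with $(x,\lambda)=(y_0,\mu_0)$, and collapse $\M((y,y),(\mu,\mu))$ to $\M(y,\mu)$ (resp.\ $\E((y,y),(\mu,\mu))$ to $2\E(y,\mu)$) via symmetry, reductivity, reflexivity and (null)homogeneity in the weights. One small caveat: your closing claim that ``any pair with the prescribed first coordinate works'' does not follow, since the right-hand side of \eq{ts} also depends on $\mu_0$ (e.g.\ for the arithmetic mean), but the paper likewise only establishes the necessity direction, which is the substantive content of the lemma.
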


\begin{proof} Using the properties of decision making functions and applying the delegativity of $\M$ for $(x,\lambda)=(y,\mu)$ twice, we get
\Eq{*}{
  \M(y,\mu)=\M((y,y),(\mu,\mu))
  =\M((y,y_0),(\mu,\mu_0))=\M((y_0,y_0),(\mu_0,\mu_0))
  =y_0.
}
Similarly, the properties of effort functions and applying the delegativity
of $\E$ yield 
\Eq{*}{
  \E(y,\mu)=\frac12\E((y,y),(\mu,\mu))
  =\frac12\E((y,y_0),(\mu,\mu_0))=\frac12\E((y_0,y_0),(\mu_0,\mu_0))
  =\mu_0,
}
which completes the proof of the lemma.
\end{proof}

Motivated by the above statement, a delegative decision-making function $\M$ and a delegative effort function $\E$ are called \emph{associated} if, for all $(y,\mu)\in\WD$, the equalities \eq{ts} and \eq{ts+} hold with $(y_0,\mu_0)=(\M(y,\mu),\E(y,\mu))$.

The aim of this paper is to present a construction of a broad class of decision-making and effort functions which arise from studying so-called weighted Bajraktarević means. Despite of the analytical background of this paper, we are convinced that our construction could provide useful models for game theoretical and for decision-making problems.

Our motivation is to present a mean-type approach to studying the Farm Structure Optimization Problem -- cf. for example 
Abd El-Wahed--Abo-Sinna \cite{AbdAbo01}, Czy\.{z}ak \cite{Czy90}, S\l{}owi\'{n}ski--Teghem \cite{SloTeg90}, Tzeng--Huang \cite{HuaTze14}, Xu--Zhou \cite{XuZho11}.

\section{Observability and conical convexity}

A subset $S$ of a linear space $X$ over $\R$ is called a \emph{ray} if $S=R_x:=\R_+x:=\{\lambda x\mid\lambda\in\R_+\}$ holds for some nonzero element $x\in X$. If $y\in R_x$, then the unique positive numbers $\lambda$ for which $y=\lambda x$ holds will be denoted by $[y:x]$. 

A set $S$ is called a \emph{cone} if it is the union of rays of $X$. One can see that $S$ is a cone precisely if it is closed under multiplication by positive scalars. The cone generated by the set $S$ -- denoted by $\cone(S)$ -- is the smallest cone containing $S$. It is clear that $\cone(S)=\R_+S=\bigcup_{x\in S}R_x$. 

A subset $S$ of $X$ over $\R$ is called \emph{observable (from the origin)} if the rays generated by two distinct elements of $S$ are disjoint. Note that for an observable subset $S$, every ray contained in $\cone(S)$ intersects $S$ at exactly one point. Moreover, the family $\{R_x \mid x \in S\}$ is a partition of $\cone(S)$. Due to this fact, for every observable set $S$ one can define the projection along rays (briefly the \emph{ray projection}) $\pi_S \colon \cone(S) \to S$ as follows: For every $x\in S$ and $y\in R_x$, we have $\pi_S(y)=x$. As a matter of fact, observability of $S$ is not only sufficient but also necessary to define such map.  

We say that a function $f\colon D\to X$ is \emph{observable} if it is injective and has an observable image. Analogously to the previous setup, we define $\pi_f:=\pi_{f(D)}$. The extended inverse of $f$, denoted by $f^{(-1)}\colon \cone (f(D))\to D$ is defined by 
\Eq{*}{
  f^{(-1)}:=f^{-1}\circ \pi_f.
}
Clearly, $f^{(-1)}(x)=f^{-1}(x)$ if $x\in f(D)$.

We say that a subset $S$ of $X$ is \emph{conically convex} if $\cone(S)$ is a convex set and convex hull of $S$ does not contain the origin.
Note that conical convexity is a weaker property than convexity: every convex set which does not contain zero is conically convex, the converse implication is not true. 

Hereafter an observable function $f \colon D \to X$ such that $f(D)$ is conically convex is called \emph{admissible}.

In the next result we give a characterization of admissibility for functions with 2-dimensional range. 

\begin{thm}\label{thm:det}
Let $f=(f_1,f_2):I\to\R^2$ be continuous. Then $f$ is admissible if and only if, for all distinct elements $x,y\in I$, $f(x)$ and $f(y)$ are linearly independent, that is,
\Eq{det}{
  \left|\begin{matrix}
         f_1(x) & f_1(y)\\f_2(x) & f_2(y)
  \end{matrix}\right|
  \neq 0.  
}
\end{thm}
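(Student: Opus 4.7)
The plan is to treat the two implications separately. The forward direction is a direct unpacking of admissibility: if $f$ is admissible, then for distinct $x, y \in I$ neither $f(x)$ nor $f(y)$ is zero (else the origin would lie in $f(I) \subset \conv(f(I))$); the two vectors do not generate the same ray (by observability of $f(I)$); and they do not generate opposite rays (else their convex hull would be a segment through the origin). These three conditions together are exactly linear independence in $\R^2$, so \eq{det} holds.

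For the converse, assume \eq{det}. For any $x \in I$, picking $y \neq x$ in $I$ and reading the first column of the determinant forces $f(x) \neq 0$, so $f: I \to \R^2 \setminus \{0\}$. Since $I$ is an interval, $f$ admits a continuous argument, namely a continuous $\theta : I \to \R$ with $f(x) = \|f(x)\|(\cos \theta(x), \sin \theta(x))$. Then
\Eq{*}{
  \det(f(x), f(y)) = \|f(x)\|\|f(y)\| \sin(\theta(y) - \theta(x)).
}
This is continuous on $I^2$, vanishes only on the diagonal, and is antisymmetric; hence on each of the two connected components $\{x < y\}$ and $\{x > y\}$ of $I^2$ minus the diagonal it has constant nonzero sign, and the two signs are opposite. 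Without loss of generality (otherwise reverse the orientation of $I$), $\det(f(x), f(y)) > 0$ for $x < y$, i.e., $\sin(\theta(y) - \theta(x)) > 0$ there.

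The main technical step, which I expect to be the crux of the argument, is upgrading this sign condition to the bound $\theta(y) - \theta(x) \in (0, \pi)$ for every $x < y$ in $I$. Fixing $x_0 \in I$ and setting $\phi(y) := \theta(y) - \theta(x_0)$, continuity gives $\phi(y) \to 0$ as $y \to x_0^+$; hence for $y$ slightly greater than $x_0$ the value $\phi(y)$ sits in the connected component $(0, \pi)$ of $\{t : \sin t > 0\}$. Since $\phi$ is continuous on $\{y > x_0\}$ and $\sin \phi$ does not vanish there, $\phi$ cannot leave $(0, \pi)$, as any such escape would require passing through a zero of $\sin$. Consequently, $\theta$ is strictly increasing on $I$, and any two of its values differ by strictly less than $\pi$.

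The four components of admissibility then follow quickly. Injectivity of $f$ and observability of $f(I)$ both rest on the strict monotonicity of $\theta$ combined with the bound $|\theta(x) - \theta(y)| < \pi$, which prevents two distinct values of $\theta$ from coinciding modulo $2\pi$. The cone $\cone(f(I))$ is the union of rays at angles in an interval of length at most $\pi$, and it contains no antipodal pair of rays (such a pair would give $\theta(y) - \theta(x) = \pi$ for some pair, contradicting the strict bound); hence it is convex. Finally, picking $\bar\theta$ with $\theta(I) \subset [\bar\theta - \pi/2, \bar\theta + \pi/2]$, the linear functional $\ell(v) := v_1 \cos \bar\theta + v_2 \sin \bar\theta$ satisfies $\ell(f(x)) = \|f(x)\|\cos(\theta(x) - \bar\theta) \geq 0$, with equality only at the two extreme angles $\theta(x) = \bar\theta \pm \pi/2$. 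Any convex combination equal to the origin would have to be supported on just these two extremes, contradicting $\theta(y) - \theta(x) < \pi$; hence $0 \notin \conv(f(I))$, completing the proof.
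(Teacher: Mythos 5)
Your proof is correct, and your forward implication coincides with the paper's (both split linear dependence of the nonzero vectors $f(x)$, $f(y)$ into the ``same ray'' case, excluded by observability, and the ``opposite ray'' case, excluded because the segment would pass through the origin). For the converse you take a genuinely different route. The paper, like you, first fixes the sign of the determinant on $\{x<y\}$ by connectedness, but then observes that $(f_1,f_2)$ is a Chebyshev system and invokes an external separation theorem of Bessenyei--P\'ales to produce a functional $\alpha f_1+\beta f_2\geq 0$, placing the curve in a closed half-plane, after which convexity of the cone is read off from the projected arc on the unit circle. You instead lift $f/\norm{f}$ to a continuous argument $\theta$ and upgrade the sign condition $\sin(\theta(y)-\theta(x))>0$ to the quantitative bound $\theta(y)-\theta(x)\in(0,\pi)$ by a connectedness argument; injectivity, observability, convexity of the cone and $0\notin\conv(f(I))$ all drop out of this single bound. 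Your version is self-contained and more elementary, at the cost of needing the existence of the lifting (standard for continuous maps from an interval into $\R^2\setminus\{0\}$). The only spot worth tightening is the final step: when a convex combination representing the origin must be supported on the zero set of your functional $\ell$, you should also dispose of the case where the support lies on a single extreme ray (the combination is then a nonzero point of that ray), not only the case where both antipodal extremes occur; this is a one-line addition.
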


\begin{proof}
Assume first that $f$ is admissible but \eq{det} is not valid for some $x,y\in I$ with $x\neq y$. Then the two vectors $f(x)$ and $f(y)$ are linearly dependent. Thus, there exist $(\alpha,\beta)\neq(0,0)$ such that $\alpha f(x)+\beta f(y)=0$.
Because $(0,0)$ is not contained in $f(I)$, we have that $f(x)\neq (0,0)\neq f(y)$, hence $\alpha\beta\neq0$. In the case when $\alpha\beta<0$, we get that the rays generated by $f(x)$ and $f(y)$ are the same, which contradicts the observability. If $\alpha\beta>0$, then $(0,0)$ is in the segment connecting $[f(x),f(y)]$, which contradicts the property that the convex hull of $f(I)$ does not contain the origin.

Assume now that \eq{det} holds for all $(x,y)\in \Delta(I):=\{(x,y)\mid x,y\in I,\,x<y\}$. This immediately shows that $f(I)$ is observable. 

Since $\Delta(I)$ is a convex set and $f$ is continuous, therefore the determinant is either positive on $\Delta(I)$ or negative on $\Delta(I)$. We may assume that it is positive everywhere on $\Delta(I)$.
Then the pair $(f_1,f_2)$ is a so-called Chebyshev system on $I$. According to \cite[Theorem 2]{BesPal03}, there exist two real constants $\alpha$ and $\beta$ such that $\alpha f_1+\beta f_2$ is positive over the interior of $I$ and thus it is nonnegative over $I$. This implies that the curve $f(I)$ is contained in the closed half plain $P=\{(u,v)\in\R^2\mid \alpha u+\beta v\geq0\}$. Consider now the unit circle and project the curve $f(I)$ into it. Then, by the continuity of the projection mapping, the projection of the curve is an arc of the unit circle which is contained in $P$. It is obvious that the conical hull of the curve $f(I)$ and the conical hull of the arc are identical. On the other hand, the property that the arc is contained in the half plain $P$, implies that its conical hull is convex. This shows the conical hull of $f(I)$ is also convex.

The origin cannot be contained in the conical hull of $f(I)$ because then there were two distinct points of $f(I)$ belonging to the boundary line $\{(u,v)\in\R^2\mid \alpha u+\beta v=0\}$ such that the origin were in the segment connecting them. This contradicts the property that every two distinct points
of $f(I)$ are linearly independent.
\end{proof}

\section{Generalized Bajraktarevi\'c means and their properties}

Given an admissible function $f\colon D\to X$, we can define the \emph{(weighted) generalized Bajraktarevi\'c mean} $\B_f\colon\WD\to D$ by
\Eq{*}{
\B_f(x,\lambda)=f^{(-1)} \Big( \sum_{k=1}^n \lambda_k f(x_k) \Big),
}
where $x =(x_1,\dots,x_n)\in D^n$ are the entries and $\lambda=(\lambda_1,\dots,\lambda_n) \in W_n$ are the corresponding weights. We also define $\beta_f \colon \WD\to \R_+$ by
\Eq{*}{
  \beta_f(x,\lambda):=\bigg[\sum_{k=1}^n \lambda_k f(x_k):f(\B_f(x,\lambda))\bigg],
}
which we call the \emph{effort function associated to $\B_f$}.

Intuitively $D$ is the space of all possible decisions, $x$ is a vector of all considered decisions, and $\lambda$ is the effort of players invested in the option. In this way $\B_f(x,\lambda)$ is the decision derived from the possible decisions and their weights.

\begin{lem}\label{lem:afy}
Let $f\colon D \to X$ be an admissible function. Then, for all $n \in \N$ and $(x,\lambda) \in D^n \times W_n$, there exists exactly one pair $(u,\eta) \in D \times \R_+$ satisfying 
\Eq{E:afy}{
\eta f(u)= \sum_{k=1}^n \lambda_k f(x_k).
}
Furthermore, $u=\B_f(x,\lambda)$ and $\eta=\beta_f(x,\lambda)$. 
\end{lem}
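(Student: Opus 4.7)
The plan is to introduce the vector $S:=\sum_{k=1}^n\lambda_k f(x_k)$, prove separately that $S\in\cone(f(D))\setminus\{0\}$, and then read off $(u,\eta)$ from the ray structure of $\cone(f(D))$; the identification with $(\B_f(x,\lambda),\beta_f(x,\lambda))$ is then just a matter of unwrapping definitions.

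First I would show that $S$ is a nonzero element of $\cone(f(D))$. Since $(\lambda_1,\dots,\lambda_n)\in W_n$, the total weight $\Lambda:=\lambda_1+\dots+\lambda_n$ is strictly positive, so
\[
 S=\Lambda\sum_{k=1}^n\frac{\lambda_k}{\Lambda}f(x_k),
\]
i.e.\ $S$ is a positive scalar multiple of a convex combination of points of $f(D)$. Admissibility of $f$ gives two facts: $\cone(f(D))$ is convex (so it contains $\conv(f(D))$ and is closed under positive dilations, whence $S\in\cone(f(D))$), and $\conv(f(D))$ does not contain the origin (so the convex combination above is nonzero, whence $S\neq 0$).

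Next, I would extract $(u,\eta)$ from $S$. By definition $\cone(f(D))=\bigcup_{x\in D}R_{f(x)}$, and observability of $f(D)$ means that these rays are pairwise disjoint, so they partition $\cone(f(D))$. Hence there is a unique $u\in D$ with $S\in R_{f(u)}$, and then a unique $\eta\in\R_+$ with $S=\eta f(u)$, namely $\eta=[S:f(u)]$. For uniqueness of the whole pair, suppose $(u',\eta')$ also satisfies \eqref{E:afy}; then $f(u')=(\eta')^{-1}S\in R_{f(u)}$, and observability combined with injectivity of $f$ forces $u'=u$, whereupon $\eta'=\eta$ follows from $f(u)\neq 0$.

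Finally, I would identify the pair. By the definition of the ray projection, $\pi_f(S)=f(u)$, so $f^{(-1)}(S)=f^{-1}(\pi_f(S))=u$, which is exactly $\B_f(x,\lambda)$. Substituting $u=\B_f(x,\lambda)$ into $S=\eta f(u)$ yields $\eta=[\,S:f(\B_f(x,\lambda))\,]=\beta_f(x,\lambda)$ by definition of $\beta_f$.

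The one non-cosmetic step is the very first one, where both parts of the admissibility hypothesis (convexity of the generated cone \emph{and} origin avoidance of the convex hull) must be used simultaneously to place $S$ inside $\cone(f(D))\setminus\{0\}$; everything afterwards is a clean consequence of observability and of the definitions of $\B_f$, $\beta_f$, $\pi_f$ and $f^{(-1)}$.
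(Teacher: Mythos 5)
Your proof is correct and follows essentially the same route as the paper's: normalize by the total weight to place $\sum_k\lambda_kf(x_k)$ in $\cone(f(D))$ via conical convexity, then use observability and injectivity of $f$ to extract the unique pair $(u,\eta)$ and identify it with $(\B_f(x,\lambda),\beta_f(x,\lambda))$ by definition. You are somewhat more explicit than the paper about the nonvanishing of the sum and the uniqueness of the pair, but these are elaborations, not a different argument.
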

 
\begin{proof}
As $f(D)$ is conically convex, we know that 
\Eq{*}{
\frac{\sum_{k=1}^n \lambda_k f(x_k)}{\sum_{k=1}^n \lambda_k} \in \cone f(D).
}
Thus there exists $\eta \in \R_+$ such that 
\Eq{*}{
  \eta^{-1} \sum_{k=1}^n \lambda_k f(x_k) \in f(D).
}
Moreover, as $f(D)$ is observable, the constant $\eta$ is uniquely determined by $x$ and $\lambda$. Now, as $f$ is injective, there exists exactly one $u\in D$ such that 
\Eq{*}{
  \eta^{-1} \sum_{k=1}^n \lambda_k f(x_k)=f(u),
}
which is trivially equivalent to \eq{E:afy}. The last assertion simply follows from the definition of $\B_f(x,\lambda)$ and $\beta_f(x,\lambda)$.
\end{proof}

Now $\B_f$ is the equivalent (or aggregated) decision, $\beta_f$ is the equivalent (or aggregated) effort. In fact $\beta_f$, can be considered as the amount of goods we need to invest into a single decision to be irrelevant between the diversed an the aggregated situation. This property is much more transparent in view of Theorem~\ref{thm:subs}.

\begin{thm} 
Let $f\colon D\to X$ be an admissible function. Then $\B_f$ is a decision-making function and $\beta_f$ is an effort function on $D$.
\end{thm}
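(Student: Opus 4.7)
The plan is to reduce each of the ten required properties (five for $\B_f$, five for $\beta_f$) to the uniqueness assertion in Lemma~\ref{lem:afy}. That lemma tells us that for any $(x,\lambda)\in\WD$, the pair $(\B_f(x,\lambda),\beta_f(x,\lambda))$ is the \emph{only} pair $(u,\eta)\in D\times\R_+$ satisfying \eq{E:afy}. So for each axiom it will suffice to exhibit a candidate pair and verify that it solves the corresponding equation \eq{E:afy}; uniqueness then forces it to coincide with $(\B_f,\beta_f)$ on the transformed input.

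First I would dispose of reflexivity for both maps simultaneously: for $(x,\lambda)\in D\times\R_+$, the identity $\lambda f(x)=\lambda f(x)$ is an instance of \eq{E:afy}, so $(\B_f(x,\lambda),\beta_f(x,\lambda))=(x,\lambda)$. Next, for the (null)homogeneity in the weights, I would multiply \eq{E:afy} by $t\in\R_+$ to obtain $(t\eta) f(u)=\sum (t\lambda_k)f(x_k)$; uniqueness applied to the input $(x,t\lambda)$ gives $\B_f(x,t\lambda)=\B_f(x,\lambda)$ and $\beta_f(x,t\lambda)=t\beta_f(x,\lambda)$. Symmetry is immediate because the right-hand side $\sum_k\lambda_kf(x_k)$ of \eq{E:afy} is invariant under permutations of the index set.

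For the elimination property, observe that if $\lambda_1=0$ then
\Eq{*}{
\sum_{k=1}^n \lambda_kf(x_k)=\sum_{k=2}^n \lambda_kf(x_k),
}
so the pair $(\B_f((x_2,\dots,x_n),(\lambda_2,\dots,\lambda_n)),\beta_f(\dots))$ solves \eq{E:afy} for the full input, and uniqueness finishes the job. For reductivity, the assumption $x_1=x_2$ yields
\Eq{*}{
\lambda_1 f(x_1)+\lambda_2 f(x_2)+\sum_{k=3}^n\lambda_kf(x_k)=(\lambda_1+\lambda_2)f(x_2)+\sum_{k=3}^n\lambda_kf(x_k),
}
so the defining equation for the reduced tuple produces a solution of \eq{E:afy} for the original one, and again uniqueness closes the argument.

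There is no serious obstacle here: the conical convexity and observability of $f(D)$, encapsulated in Lemma~\ref{lem:afy}, do all the heavy lifting, and every axiom becomes an elementary manipulation of the linear combination $\sum_k \lambda_k f(x_k)$. The only point worth stating carefully is that in each verification one must check that the candidate weight $\eta$ lies in $\R_+$ (strictly positive), which is automatic because we start from $\lambda\in W_n$ and the transformations preserve the property that not all weights vanish.
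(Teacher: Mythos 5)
Your proof is correct and follows essentially the same route as the paper: both arguments rest on Lemma~\ref{lem:afy}, observing that the pair $(\B_f(x,\lambda),\beta_f(x,\lambda))$ is uniquely determined by the vector $\sum_k\lambda_k f(x_k)$, so that each axiom reduces to an elementary manipulation of that sum. The only cosmetic difference is that you also route reflexivity and (null)homogeneity through the uniqueness assertion, whereas the paper verifies those two directly from the definitions of $f^{(-1)}$ and the bracket $[\,\cdot:\cdot\,]$; your closing remark about the positivity of the candidate weight $\eta$ correctly addresses the one point that needs checking.
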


\begin{proof}
Fix $n \in \N$ and $(x,\lambda) \in D^n \times W_n$. In view of Lemma~\ref{lem:afy} both $\B_f(x,\lambda)$ and $\beta_f(x,\lambda)$ depends on its arguments implicitly via the sum $\sum_{k=1}^n \lambda_k f(x_k)$. This immediately implies that $\B_f$ and $\beta_f$ are both symmetric, reductive and eliminative. The nullhomogeneity of $\B_f$ is obvious in view of the definition of $f^{(-1)}=f^{-1} \circ \pi_f$. The reflexivity properties of $\B_f$ and $\beta_f$ are immediate consequences of the equalities 
\Eq{*}{
\B_f(y,\lambda)&=f^{(-1)}(\lambda f(y))=f^{-1}\circ f(y)=y,\\
\beta_f(y,\lambda)
&=[\lambda f(y)\colon f(\B_f(y,\lambda))]
=[\lambda f(y)\colon f(y)]=\lambda\qquad (y \in D,\lambda\in\R_+).
}

Now we only need to verify the homogeneity of $\beta_f$ in the weights. To this end, take additionally $t \in \R_+$. 
By the definition of $\beta_f$ and the nullhomogeneity of $\B_f$, we have
\Eq{*}{
\beta_f(x,t\lambda)&=\bigg[\sum_{k=1}^n t\lambda_k f(x_k):f(\B_f(x,t\lambda))\bigg]\\
&=t\bigg[\sum_{k=1}^n \lambda_k f(x_k):f(\B_f(x,\lambda))\bigg]=t\beta_f(x,\lambda),
}
which completes the proof.
\end{proof}

\begin{thm}\label{thm:f1f2}
Let $f_1,f_2\colon I \to \R$ be continuous functions such that $f_2$ is nowhere zero and $f_1/f_2$ is strictly monotone. Then $f=(f_1,f_2)$ is admissible and we have the equalities
\begin{align}
\B_f(x,\lambda)&=\Big(\frac{f_1}{f_2}\Big)^{-1} \bigg(\frac{\lambda_1 f_1(x_1)+\dots+\lambda_nf_1(x_n)}{\lambda_1 f_2(x_1)+\dots+\lambda_nf_2(x_n)} \bigg) &\qquad ((x,\lambda)\in I^n \times W_n), \label{Bf}\\
\beta_f(x,\lambda)
  &=\frac{\lambda_1 f_2(x_1)+\dots+\lambda_nf_2(x_n)}
  {f_2(\B_f(x,\lambda))} &\qquad ((x,\lambda)\in I^n \times W_n)\label{bf}. 
\end{align}

\end{thm}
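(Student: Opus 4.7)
The plan is to verify admissibility of $f=(f_1,f_2)$ via the determinant criterion in Theorem~\ref{thm:det}, and then to extract both formulas directly from the defining identity provided by Lemma~\ref{lem:afy}.

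First I would establish admissibility. Since $f$ is continuous and $f_2(x)f_2(y)\neq 0$ for every $x,y\in I$, the determinant in \eqref{det} factors as
\begin{equation*}
\begin{aligned}
\left|\begin{matrix} f_1(x) & f_1(y)\\ f_2(x) & f_2(y)\end{matrix}\right|
=f_1(x)f_2(y)-f_1(y)f_2(x)
=f_2(x)f_2(y)\Bigl(\tfrac{f_1}{f_2}(x)-\tfrac{f_1}{f_2}(y)\Bigr).
\end{aligned}
\end{equation*}
The strict monotonicity of $f_1/f_2$ forces the parenthesised factor to be nonzero whenever $x\neq y$, so \eqref{det} holds and Theorem~\ref{thm:det} yields that $f$ is admissible. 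Consequently $\B_f$ and $\beta_f$ are well defined on $\WD$ (with $D=I$).

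Next I would fix $(x,\lambda)\in I^n\times W_n$ and apply Lemma~\ref{lem:afy}: writing $u:=\B_f(x,\lambda)$ and $\eta:=\beta_f(x,\lambda)\in\R_+$, the lemma gives
\begin{equation*}
\begin{aligned}
\eta f_1(u)=\sum_{k=1}^n\lambda_k f_1(x_k),\qquad
\eta f_2(u)=\sum_{k=1}^n\lambda_k f_2(x_k).
\end{aligned}
\end{equation*}
Since $f_2$ is continuous and nowhere vanishing on $I$, it has constant sign, so $f_2(u)$ and the sum on the right of the second identity are simultaneously nonzero and share that sign; this is what keeps the ratio $\eta$ positive. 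Dividing the two identities eliminates $\eta$ and yields
\begin{equation*}
\begin{aligned}
\tfrac{f_1}{f_2}(u)=\frac{\lambda_1 f_1(x_1)+\dots+\lambda_n f_1(x_n)}{\lambda_1 f_2(x_1)+\dots+\lambda_n f_2(x_n)},
\end{aligned}
\end{equation*}
and since $f_1/f_2$ is strictly monotone (hence invertible on its range), solving for $u$ gives formula \eqref{Bf}. Formula \eqref{bf} then follows by reading off $\eta$ from the second displayed identity, dividing both sides by $f_2(u)=f_2(\B_f(x,\lambda))$.

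I do not expect any serious obstacle: the content is essentially a translation between the coordinate-free description of Lemma~\ref{lem:afy} and coordinates in $\R^2$. The only subtle point, worth mentioning explicitly, is to justify that the denominator $\sum_k\lambda_k f_2(x_k)$ in \eqref{Bf} is nonzero and that the quotient $\eta$ in \eqref{bf} is genuinely positive; both are handled by the constant-sign argument for $f_2$ together with the positivity of $\eta$ guaranteed by Lemma~\ref{lem:afy}.
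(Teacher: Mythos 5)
Your proposal is correct and follows essentially the same route as the paper: admissibility via the determinant factorization $f_1(x)f_2(y)-f_1(y)f_2(x)=f_2(x)f_2(y)\bigl(\tfrac{f_1}{f_2}(x)-\tfrac{f_1}{f_2}(y)\bigr)$ combined with Theorem~\ref{thm:det}, and then both formulas read off from the fact that $f(\B_f(x,\lambda))$ and $\sum_k\lambda_k f(x_k)$ lie on the same ray (which is exactly the content of Lemma~\ref{lem:afy}). Your extra remark on the sign of $f_2$ guaranteeing a nonzero denominator is a welcome clarification the paper leaves implicit.
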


\begin{proof} First we show that $f=(f_1,f_2)$ is admissible. In view of Theorem~\ref{thm:det}, it is sufficient to show that \eq{det} holds for all distinct elements $x,y$ of $I$. Indeed, 
\Eq{*}{
  \left|\begin{matrix}
         f_1(x) & f_1(y)\\f_2(x) & f_2(y)
  \end{matrix}\right|
  =f_1(x)f_2(y)-f_1(y)f_2(x)
  =f_2(x)f_2(y)\bigg(\frac{f_1(x)}{f_2(x)}-\frac{f_1(y)}{f_2(y)}\bigg),
}
which is nonzero by our assumptions.

To prove the identity \eq{Bf},
let $n \in \N$ and a pair $(x,\lambda)\in I^n \times W_n$ be fixed. As $f(\B_f(x,\lambda))$ and $\sum_{i=1}^n \lambda_i f(x_i)$ are on the same ray, we have
\Eq{*}{
\frac{f_1(\B_f(x,\lambda))}{f_2(\B_f(x,\lambda))}= \frac{\lambda_1 f_1(x_1)+\dots+\lambda_nf_1(x_n)}{\lambda_1 f_2(x_1)+\dots+\lambda_nf_2(x_n)}.
}
Using that $f_1/f_2$ is invertible and applying its inverse side-by-side, we obtain the desired equality \eq{Bf}. The equality 
\Eq{*}{
\beta_f(x,\lambda)
=\bigg[\sum_{k=1}^n \lambda_k f(x_k):f(\B_f(x,\lambda))\bigg]
=\bigg[\sum_{k=1}^n \lambda_k f_2(x_k):f_2(\B_f(x,\lambda))\bigg]
}
implies \eq{bf} immediately.
\end{proof}

A classical particular case of the above theorem is when $f_1$ and $f_2$ are power functions on $I=\R_+$: $f_1(x)=x^p$ and $f_2(x)=x^q$, where $p,q\in\R$ with $p\neq q$. Then
\Eq{E:Ginipq}{
  \B_f(x,\lambda)&=G_{p,q}(x,\lambda)
  :=\bigg(\frac{\lambda_1 x_1^p+\dots+\lambda_n x_n^p}
  {\lambda_1 x_1^q+\dots+\lambda_n x_n^q}\bigg)^{\frac{1}{p-q}},\\
  \beta_f(x,\lambda)&=\gamma_{p,q}(x,\lambda)
  :=\frac{(\lambda_1 x_1^q+\dots+\lambda_n x_n^q)^{\frac{p}{p-q}}}
  {(\lambda_1 x_1^p+\dots+\lambda_n x_n^p)^{\frac{q}{p-q}}},
}
which are called the \emph{Gini mean} and the \emph{Gini effort function} of parameter $(p,q)$ where $p\neq q$ (cf.\ \cite{Gin38}). For the case $p=q$, let $f_1(x):=x^p\ln(x)$ and $f_2(x):=x^p$. Then we have
\Eq{*}{
  \B_f(x,\lambda)&=G_{p,p}(x,\lambda)
  :=\exp\bigg(\frac{\lambda_1 x_1^p\ln x_1+\dots+\lambda_n x_n^p\ln x_p}
  {\lambda_1 x_1^p+\dots+\lambda_n x_n^p}\bigg),\\
  \beta_f(x,\lambda)&=\gamma_{p,p}(x,\lambda)
  :=(\lambda_1 x_1^p+\dots+\lambda_n x_n^p) \exp\bigg(( -p) \cdot \frac{\lambda_1 x_1^p\ln x_1+\dots+\lambda_n x_n^p\ln x_p}
  {\lambda_1 x_1^p+\dots+\lambda_n x_n^p}\bigg),
}

\section{Aggregation-type properties}

\subsection{Delegativity}
In this subsection we establish the delegativity of the generalized Bajraktarević means and the corresponding effort functions, moreover, we show that these two maps are associated to each other. 

\begin{thm}\label{thm:subs} 
Let $f \colon D \to X$ be an admissible function. 
Then both $\B_f$ and $\beta_f$ are delegative and associated.
\end{thm}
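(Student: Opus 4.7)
The plan is to read off delegativity directly from Lemma~\ref{lem:afy}, since both $\B_f$ and $\beta_f$ depend on their arguments only through the single vector $\sum_k \lambda_k f(x_k) \in X$. Given a fixed pair $(y,\mu) \in \WD$ (say $y \in D^m$, $\mu \in W_m$), the natural candidate for the aggregated pair is $(y_0,\mu_0) := (\B_f(y,\mu), \beta_f(y,\mu))$, which by Lemma~\ref{lem:afy} satisfies the fundamental identity
\Eq{*}{
  \mu_0 \, f(y_0) = \sum_{j=1}^{m} \mu_j f(y_j).
}

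First, I would apply this identity inside the vector summation for $\B_f((x,y),(\lambda,\mu))$: for any $(x,\lambda) \in \WD$,
\Eq{*}{
  \sum_{i=1}^{n} \lambda_i f(x_i) + \sum_{j=1}^{m} \mu_j f(y_j)
  = \sum_{i=1}^{n} \lambda_i f(x_i) + \mu_0 f(y_0).
}
Applying $f^{(-1)}$ to both sides yields exactly the delegativity equation \eq{ts} for $\B_f$ with the choice $(y_0,\mu_0)$.

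Next, for the effort function, I would invoke the uniqueness part of Lemma~\ref{lem:afy}: the value of $\beta_f((x,y),(\lambda,\mu))$ is the unique positive scalar $\eta$ such that $\eta f(u)$ equals the common left- and right-hand sides of the display above, for the corresponding $u \in D$. Since the two sums are literally equal as vectors in $X$, the uniqueness forces $\beta_f((x,y),(\lambda,\mu)) = \beta_f((x,y_0),(\lambda,\mu_0))$, which is \eq{ts+}. Together these two identities prove that $\B_f$ is delegative, $\beta_f$ is delegative, and that they are associated in the sense defined after the earlier lemma.

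There is essentially no obstacle: the whole argument reduces to the observation that aggregation in $\B_f$ and $\beta_f$ is linear in the $f$-image, so the key substitution $\sum \mu_j f(y_j) \leftrightarrow \mu_0 f(y_0)$ is exact, not merely approximate. The only thing to check carefully is that $(y_0,\mu_0)$ does indeed lie in $D \times \R_+$, but this is guaranteed because admissibility of $f$ ensures $\B_f(y,\mu) \in D$ and $\beta_f(y,\mu) \in \R_+$ by Lemma~\ref{lem:afy}.
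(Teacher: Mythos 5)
Your proof is correct and follows essentially the same route as the paper: both arguments substitute $\mu_0 f(y_0)=\sum_j \mu_j f(y_j)$ (from Lemma~\ref{lem:afy}) into the defining sum and then invoke the uniqueness part of that lemma to read off both \eq{ts} and \eq{ts+} simultaneously. The only cosmetic difference is that the paper phrases everything through the single identity $\beta_f\cdot f(\B_f)=\sum\lambda_k f(x_k)$, whereas you separate the application of $f^{(-1)}$ from the uniqueness argument.
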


\begin{proof} Fix $m\in\N$ and $(y,\mu)\in D^m\times W_m$. For an arbitrary $(x,\lambda)\in D^n\times W_n$, according to the Lemma~\ref{lem:afy}, we have 
\Eq{*}{
  \beta_f((x,y),(\lambda,\mu)) f(\B_f((x,y),(\lambda,\mu)))
  &=\sum_{k=1}^n \lambda_k f(x_k)+\sum_{j=1}^m \mu_jf(y_j),\\
  \qquad \beta_f(y,\mu) f(\B_f(y,\mu))&=\sum_{j=1}^m \mu_jf(y_j).
}
Then, these equalites and Lemma~\ref{lem:afy} imply
\Eq{*}{
  \beta_f((x,y),(\lambda,\mu)) f(\B_f((x,y),(\lambda,\mu)))
  &=\sum_{k=1}^n \lambda_k f(x_k)+\beta_f(y,\mu) f(\B_f(y,\mu))\\
  &=\beta_f((x,y_0),(\lambda,\mu_0))f(\B_f((x,y_0),(\lambda,\mu_0))),
}
which then yields \eq{ts}.
\end{proof}

The following result could be derived from the above theorem, but we shall provide a direct and short proof for it.

\begin{cor} 
Let $f \colon D \to X$ be an admissible function. Let $n,m\in\N$, $x\in D^n$ and $\lambda^{(1)},\dots,\lambda^{(m)}\in W_n$. Denote $y_j:=\B_f(x,\lambda^{(j)})$ and $\mu_j:=\beta_f(x,\lambda^{(j)})$ for $j\in\{1,\dots,m\}$. Then, for all $(t_1,\dots,t_m)\in W_m$, 
\Eq{tt}{
  \B_f(x,t_1\lambda^{(1)}+\dots+t_m\lambda^{(m)})
  &=\B_f(y,(t_1\mu_1,\dots,t_m\mu_m)),\\
  \beta_f(x,t_1\lambda^{(1)}+\dots+t_m\lambda^{(m)})
  &=\beta_f(y,(t_1\mu_1,\dots,t_m\mu_m)).
}
\end{cor}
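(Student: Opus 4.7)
The plan is to reduce everything to the uniqueness statement in Lemma~\ref{lem:afy}. By definition of $y_j$ and $\mu_j$, that lemma gives, for each $j\in\{1,\dots,m\}$, the identity
\Eq{*}{
 \mu_j f(y_j) = \sum_{k=1}^n \lambda^{(j)}_k f(x_k).
}
First I would multiply this equality by $t_j$ and sum over $j$, and then swap the order of summation on the right-hand side. This produces
\Eq{*}{
 \sum_{j=1}^m (t_j\mu_j) f(y_j)
 = \sum_{k=1}^n \bigg(\sum_{j=1}^m t_j\lambda^{(j)}_k\bigg) f(x_k),
}
which is the crucial bookkeeping step; it is purely formal (just Fubini for finite sums) and requires no new idea.

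Next I would check that the two sides are legitimate inputs to $\B_f$ and $\beta_f$, that is, that $(t_1\mu_1,\dots,t_m\mu_m)\in W_m$ and $t_1\lambda^{(1)}+\dots+t_m\lambda^{(m)}\in W_n$. For the first, since each $\mu_j\in\R_+$ and at least one $t_j$ is positive, the vector $(t_j\mu_j)_{j=1}^m$ is nonzero with nonnegative entries; for the second, the entries of $\lambda^{(j)}$ are nonnegative and at least one coordinate of any $\lambda^{(j_0)}$ with $t_{j_0}>0$ is positive, hence the combination has at least one positive coordinate.

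Having verified this, I would apply Lemma~\ref{lem:afy} once to the pair $(y,(t_1\mu_1,\dots,t_m\mu_m))$ and once to the pair $(x,t_1\lambda^{(1)}+\dots+t_m\lambda^{(m)})$. The lemma asserts that each side of the displayed identity admits a unique representation of the form $\eta f(u)$ with $(u,\eta)\in D\times\R_+$, with $u$ being the $\B_f$-value and $\eta$ the $\beta_f$-value of the corresponding input. Since the two sides are literally equal, the uniqueness part of the lemma forces both
\Eq{*}{
 \B_f(x,t_1\lambda^{(1)}+\dots+t_m\lambda^{(m)})
 &=\B_f(y,(t_1\mu_1,\dots,t_m\mu_m)),\\
 \beta_f(x,t_1\lambda^{(1)}+\dots+t_m\lambda^{(m)})
 &=\beta_f(y,(t_1\mu_1,\dots,t_m\mu_m)),
}
yielding \eq{tt}. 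There is no real obstacle here beyond the admissibility check of the weight vectors; the argument is just a clean application of the uniqueness guaranteed by observability and the linearity of the defining sum.
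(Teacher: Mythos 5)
Your argument is correct and is essentially identical to the paper's proof: both start from the identity $\mu_j f(y_j)=\sum_{i=1}^n\lambda^{(j)}_i f(x_i)$ furnished by Lemma~\ref{lem:afy}, multiply by $t_j$, sum over $j$, interchange the two finite sums, and then invoke the uniqueness of the representation $\eta f(u)$ (i.e.\ the observability of $f$) to read off both equalities in \eq{tt}. The only difference is that you explicitly verify that $(t_1\mu_1,\dots,t_m\mu_m)\in W_m$ and $t_1\lambda^{(1)}+\dots+t_m\lambda^{(m)}\in W_n$, a check the paper leaves implicit.
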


\begin{proof} By the definitions of $y_1,\dots,y_m$ and $\mu_1,\dots,\mu_m$, according to Lemma~\ref{lem:afy}, we have
\Eq{*}{
\mu_jf(y_j)
  =\beta_f(x,\lambda^{(j)})f(\B_f(x,\lambda^{(j)}))
  =\sum_{i=1}^n \lambda^{(j)}_i f(x_i) \qquad (j\in\{1,\dots,m\}).
}
Multiplying this equality by $t_j$ side by side, and then summing up the equalities so obtained for $j\in\{1,\dots,m\}$, we get
\Eq{*}{
\beta_f&(y,(t_1\mu_1,\dots,t_m\mu_m))
  f(\B_f(y,(t_1\mu_1,\dots,t_m\mu_m)))\\
  &=\sum_{j=1}^m t_j\mu_jf(y_j)
  =\sum_{j=1}^m t_j\bigg(\sum_{i=1}^n \lambda^{(j)}_if(x_i)\bigg)   =\sum_{i=1}^n \bigg(\sum_{j=1}^m t_j\lambda^{(j)}_i\bigg) f(x_i)\\
  &=\beta_f(x,t_1\lambda^{(1)}+\dots+t_m\lambda^{(m)})
  f(\B_f(x,t_1\lambda^{(1)}+\dots+t_m\lambda^{(m)})).
}
The above equality, by the observability of $f$, yields \eq{tt}.
\end{proof}

The latter corollary can be also rewritten in the matrix form.

\begin{cor}\label{cor:subs} 
Let $f \colon D \to X$ be an admissible function. Let $x\in D^n$, and
$\Lambda=(\lambda^{(1)},\dots,\lambda^{(m)})\in W_n^m$. Define a vector $y:=\big(\B_f(x,\lambda^{(i)})\big)_{i=1}^m \in D^m$ and $\mu:=\big(\beta_f(x,\lambda^{(i)})\big)_{i=1}^m \in \R_+^m$. Then,
for all $t\in W_m$,
\Eq{E:Lt}{
  \B_f(x,\Lambda t)=B_f(y,\mu\!\cdot\! t)\quad\text{ and }\quad \beta_f(x,\Lambda t)=\beta_f(y,\mu\!\cdot\! t),
}
where ``$\cdot$'' stands for the coordinate-wise multiplication of the elements of $W_m$. 
\end{cor}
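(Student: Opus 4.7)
The plan is to recognize that this corollary is merely a matrix/vector reformulation of the preceding corollary, so the proof reduces to unpacking the notation and invoking the previous result.

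First I would expand the matrix--vector product: by the standard rule for multiplying the $n\times m$ matrix $\Lambda=(\lambda^{(1)},\dots,\lambda^{(m)})$ by $t=(t_1,\dots,t_m)^\top$, one has
\Eq{*}{
\Lambda t=\sum_{j=1}^m t_j\lambda^{(j)}=t_1\lambda^{(1)}+\dots+t_m\lambda^{(m)},
}
which is precisely the weight vector appearing as the second argument of $\B_f$ and $\beta_f$ on the left-hand side of \eq{tt}. Similarly, by the definition of the coordinate-wise product on $\R^m$, $\mu\cdot t=(t_1\mu_1,\dots,t_m\mu_m)$, which is exactly the weight vector appearing on the right-hand side of \eq{tt}, while $y=(y_1,\dots,y_m)=(\B_f(x,\lambda^{(j)}))_{j=1}^m$ coincides with the vector of aggregated decisions in the previous corollary.

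Before quoting the previous corollary, I would verify that $\Lambda t$ and $\mu\cdot t$ really lie in the weight domains $W_n$ and $W_m$, respectively, since $\B_f$ and $\beta_f$ are only defined on such tuples. Since $t\in W_m$, at least one coordinate $t_{j_0}$ is strictly positive; then $\lambda^{(j_0)}\in W_n$ has a strictly positive coordinate, and since all remaining summands $t_j\lambda^{(j)}$ are coordinate-wise nonnegative, $\Lambda t\in W_n$. Analogously, Lemma~\ref{lem:afy} ensures $\mu_j=\beta_f(x,\lambda^{(j)})\in\R_+$ for each $j$, hence $\mu_{j_0}t_{j_0}>0$ and so $\mu\cdot t\in W_m$.

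With these identifications, both asserted equalities in \eq{E:Lt} reduce termwise to the equations \eq{tt} of the preceding corollary, which then immediately yields the result. Since this is purely a notational reformulation, there is no substantive obstacle; the only point that needs a moment of care is the domain check for the combined weight vectors, which is handled by the observation above.
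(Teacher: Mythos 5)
Your proof is correct and takes the same route the paper intends: the paper offers no separate argument for this corollary, presenting it purely as a matrix-notation restatement of the preceding corollary, which is exactly how you treat it. Your extra check that $\Lambda t\in W_n$ and $\mu\cdot t\in W_m$ is a reasonable bit of added care that the paper leaves implicit.
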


\subsection{Casuativity} 
We say that a decision making function $\M$ on $D$ is \emph{casuative} if for all $(x,\lambda)\in\WD$ and for all pair $(y,\mu) \in D \times (0,+\infty)$, we have
\Eq{E:casu}{
\M(x,\lambda)=\M\big((x,y),(\lambda,\mu)\big) \iff y=\M(x,\lambda).
}

Casuativity is somehow opposite to conservativity. Namely there holds the following easy-to-see lemma.
\begin{lem}
 Let $\M$ be a symmetric and casuative decision-making function on $D$. Then for all distinct $x_1,x_2 \in D$ and $\lambda_1,\lambda_2 \in (0,+\infty)$ we have $\M((x_1,x_2),(\lambda_1,\lambda_2)) \in D \setminus\{x_1,x_2\}$.
\end{lem}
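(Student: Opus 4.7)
The plan is to prove this by contradiction, reducing both bad cases to a violation of casuativity via the reflexivity axiom (i). Suppose that $\M((x_1,x_2),(\lambda_1,\lambda_2))$ equals $x_1$ or $x_2$; by symmetry it suffices to treat the case $\M((x_1,x_2),(\lambda_1,\lambda_2))=x_1$, since the other case follows by first applying symmetry to swap the two coordinates and then proceeding identically.

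Once we are in that case, the idea is to invoke the casuativity condition \eq{E:casu} with the singleton $(x,\lambda)=(x_1,\lambda_1)$ and the appended pair $(y,\mu)=(x_2,\lambda_2)$. Reflexivity gives $\M(x_1,\lambda_1)=x_1$, and our contradiction hypothesis says $\M((x_1,x_2),(\lambda_1,\lambda_2))=x_1$ as well, so the left-hand equality in \eq{E:casu} holds. Casuativity then forces $x_2=\M(x_1,\lambda_1)=x_1$, contradicting the assumption that $x_1\neq x_2$.

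The casuativity definition requires $\mu\in(0,+\infty)$, which is satisfied since $\lambda_2>0$ by hypothesis; there are no further hidden conditions to check. Hence there is no real obstacle: the entire content of the lemma is that reflexivity plus casuativity prevent the two-point aggregation from coinciding with either input, and symmetry takes care of the asymmetry between the two coordinates. The proof is thus essentially a two-line verification.
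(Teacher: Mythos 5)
Your proof is correct: the paper states this lemma without proof (labelling it ``easy-to-see''), and your argument---applying casuativity with $(x,\lambda)=(x_1,\lambda_1)$, $(y,\mu)=(x_2,\lambda_2)$, using reflexivity to get $\M(x_1,\lambda_1)=x_1$, and handling the second case via symmetry---is exactly the intended verification. All hypotheses are checked ($\lambda_1>0$ puts $(x_1,\lambda_1)$ in $\mathscr{W}(D)$ and $\lambda_2>0$ meets the requirement $\mu\in(0,+\infty)$), so nothing is missing.
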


It is also reasonable to define \emph{weak casuativity} in the case when the $(\Leftarrow)$ implication of \eq{E:casu} holds. Observe that there are number of weakly casuative decision making functions which are not casuative, for example decision making functions which are induced by a preference relation. 

\begin{prop}
 Generalized Bajraktarević mean $\B_f$ is casuative for every admissible function $f \colon D \to X$.
\end{prop}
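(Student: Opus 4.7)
The plan is to apply Lemma~\ref{lem:afy} to both sides of the equation in \eq{E:casu}. Writing $u:=\B_f(x,\lambda)$, $\eta:=\beta_f(x,\lambda)$, $u':=\B_f((x,y),(\lambda,\mu))$ and $\eta':=\beta_f((x,y),(\lambda,\mu))$, Lemma~\ref{lem:afy} immediately supplies the two identities
\Eq{*}{
  \eta f(u)=\sum_{k=1}^n \lambda_k f(x_k),
  \qquad
  \eta' f(u')=\sum_{k=1}^n \lambda_k f(x_k)+\mu f(y),
}
so the entire argument will be about comparing these two displays.

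For the easy implication $(\Leftarrow)$, assume $y=u$. Adding $\mu f(u)$ to the first identity gives $(\eta+\mu)f(u)=\sum_{k=1}^n \lambda_k f(x_k)+\mu f(y)$, and the uniqueness clause of Lemma~\ref{lem:afy} forces $u'=u$ (and $\eta'=\eta+\mu$).

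For the harder implication $(\Rightarrow)$, assume $u'=u$. Subtracting the two identities produces the key relation
\Eq{*}{
  (\eta'-\eta)f(u)=\mu f(y).
}
Since $f(D)$ is conically convex, the origin lies neither in $f(D)$ nor in $\conv(f(D))$; in particular $f(y)\neq 0$, so the right-hand side is nonzero, whence $\eta'\neq\eta$.

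The main obstacle is ruling out $\eta'<\eta$. If this held, then $f(y)$ and $f(u)$ would be on opposite rays, and an elementary computation shows that an appropriate convex combination $\alpha f(u)+(1-\alpha)f(y)$ with $\alpha\in(0,1)$ equals the origin, contradicting the assumption that $\conv(f(D))$ avoids $0$. Hence $\eta'>\eta$, and the relation $(\eta'-\eta)f(u)=\mu f(y)$ expresses $f(y)$ as a strictly positive multiple of $f(u)$, i.e.\ $f(y)$ and $f(u)$ belong to the same ray in $X$. By observability of $f(D)$, distinct elements of $f(D)$ generate disjoint rays, so $f(y)=f(u)$; by injectivity of $f$ we conclude $y=u=\B_f(x,\lambda)$, which is exactly what casuativity requires.
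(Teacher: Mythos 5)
Your proof is correct and follows essentially the same route as the paper's: both derive the relation $(\eta'-\eta)f(u)=\mu f(y)$ from Lemma~\ref{lem:afy}, use $0\notin\conv f(D)$ to force the coefficient to be positive, and conclude $y=u$ from observability and injectivity. The only cosmetic difference is that the paper first invokes delegativity to reduce to the case $n=1$, whereas you apply Lemma~\ref{lem:afy} directly to the $n$-tuple, which makes the argument marginally more self-contained.
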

\begin{proof}
As $\B_f$ is delegative it is sufficient to show that \eq{E:casu} holds for all $(x,\lambda)\in D \times \R_+$. The $(\Leftarrow)$ part is then trivial. To prove the converse implication let $x,y \in D$ and $\lambda, \mu \in (0,+\infty)$ such that
\Eq{*}{
\B_f\big((x,y),(\lambda,\mu)\big)=\B_f(x,\lambda)=x.
}
Thus, by Lemma~\ref{lem:afy}, there exists $\eta \in \R_+$ such that $\lambda f(x)+\mu f(y)=\eta f(x)$.
Consequently, as $\mu \ne 0$ we get $f(y)=\tfrac{\eta-\lambda}\mu f(x)$. Now admissibility of $f$ implies $0 \notin \conv f(X)$ and therefore $\tfrac{\eta-\lambda}\mu>0$. Then observability of $f$ yields $y=x$.
\end{proof}

This property plays an important role in the definition of effort function.

\begin{cor}\label{cor:Bbeta}
 Let $f\colon D \to X$ and $g \colon D \to Y$ be two admissible function such that $\B_f=\B_g$. Then $\beta_f=\beta_g$.
\end{cor}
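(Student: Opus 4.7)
The plan is to combine the delegativity and association from Theorem~\ref{thm:subs} with the casuativity of generalized Bajraktarevi\'c means: this should let me collapse the hypothesis $\B_f=\B_g$ into a pointwise identity that compares $\beta_f(y,\mu)$ and $\beta_g(y,\mu)$ through $\B_f$ alone, after which Lemma~\ref{lem:afy} will supply a linear relation that admissibility forbids unless the two effort values coincide.

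First I would fix $(y,\mu)\in\WD$ and set $z:=\B_f(y,\mu)=\B_g(y,\mu)$ (equal by hypothesis), $\alpha:=\beta_f(y,\mu)$, $\beta:=\beta_g(y,\mu)$. Applying Theorem~\ref{thm:subs} to each of $\B_f$ and $\B_g$ and then twice using $\B_f=\B_g$, I expect to arrive at
\Eq{*}{
\B_f((x,z),(\lambda,\alpha))=\B_f((x,z),(\lambda,\beta))\qquad((x,\lambda)\in\WD).
}
Next, picking any $x\in D\setminus\{z\}$ (the case $D=\{z\}$ being trivial from reflexivity) and any $\lambda\in\R_+$, Lemma~\ref{lem:afy} supplies $\eta_1,\eta_2\in\R_+$ and a common $w\in D$ with $\lambda f(x)+\alpha f(z)=\eta_1 f(w)$ and $\lambda f(x)+\beta f(z)=\eta_2 f(w)$. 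Subtracting gives $(\alpha-\beta)f(z)=(\eta_1-\eta_2)f(w)$.

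The main obstacle is the closing case analysis. Assuming $\alpha\neq\beta$, the ratio $c:=(\eta_1-\eta_2)/(\alpha-\beta)$ satisfies $f(z)=cf(w)$. I plan to rule out $c=0$ via $0\notin f(D)$ and $c<0$ via $0\notin\conv(f(D))$ (both consequences of conical convexity of $f(D)$); the remaining case $c>0$ forces $z=w$ by observability of $f(D)$, and then $\B_f((x,z),(\lambda,\alpha))=z=\B_f(z,\alpha)$ combined with the casuativity of $\B_f$ forces $x=z$, contradicting our choice. This delivers $\alpha=\beta$, i.e.\ $\beta_f(y,\mu)=\beta_g(y,\mu)$, for every $(y,\mu)\in\WD$.
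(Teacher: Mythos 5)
Your proof is correct. Its skeleton coincides with the paper's: fix the input, use the delegativity and association from Theorem~\ref{thm:subs} to replace $(y,\mu)$ by $(\B_f(y,\mu),\beta_f(y,\mu))$ for $\B_f$ and by $(\B_g(y,\mu),\beta_g(y,\mu))$ for $\B_g$, then derive a contradiction from $\alpha\neq\beta$ with the help of casuativity. Where you diverge is in how the contradiction is extracted. The paper stays entirely at the level of the decision-making axioms: setting $d:=\beta_g(x,\lambda)-\beta_f(x,\lambda)>0$, it re-expands $\B_f((m,y_0),(\beta_f+d,1))$ as $\B_f((m,y_0,m),(\beta_f,1,d))$ via reductivity read backwards, so that casuativity applies at once to the appended copy of $m$ and then a second application yields $y_0=m$. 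You instead descend to the ray equation of Lemma~\ref{lem:afy}, subtract the two representations $\lambda f(x)+\alpha f(z)=\eta_1 f(w)$ and $\lambda f(x)+\beta f(z)=\eta_2 f(w)$ (the common $w$ being justified exactly by the identity $\B_f((x,z),(\lambda,\alpha))=\B_f((x,z),(\lambda,\beta))$ you established), and dispose of the scalar $c$ by a sign analysis resting on $0\notin\conv(f(D))$ and observability, invoking casuativity only once at the very end. Both routes are sound; the paper's is shorter once one spots the weight-splitting trick, while yours is more computational but makes explicit which parts of admissibility (conical convexity versus observability) are doing the work at each stage. One small point for a polished write-up: in the case $c<0$ you should also cover the possibility $z=w$, where the convex-combination argument degenerates to $(1-c)f(z)=0$, i.e.\ $f(z)=0$; this is still excluded by $0\notin\conv(f(D))$, so nothing is lost.
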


\begin{proof}
Let $(x,\lambda)\in\WD$, set $m:=\B_f(x,\lambda)=\B_g(x,\lambda)$ and assume that $d:=\beta_g(x,\lambda)-\beta_f(x,\lambda)\neq 0$. Without loss of generality, we may assume that $d>0$. 

Take $y_0 \in D \setminus \{m\}$ arbitrarily. By Theorem~\ref{thm:subs} and the equality $\B_f=\B_g$ we have
 \Eq{*}{
 \B_f((m,y_0),(\beta_f(x,\lambda),1))
&=\B_f((x,y_0),(\lambda,1))
=\B_g((x,y_0),(\lambda,1))\\
&=\B_g((m,y_0),(\beta_g(x,\lambda),1))
=\B_f((m,y_0),(\beta_g(x,\lambda),1))\\
&=\B_f((m,y_0,m),(\beta_f(x,\lambda),1,d)).
 }
Now by casuativity of $\B_f$ and $d>0$, we obtain
\Eq{*}{
\B_f((m,y_0),(\beta_f(x,\lambda),1))=m,
}
which implies $y_0=m$, a contradiction. Therefore, $d=0$, i.e., $\beta_g(x,\lambda)=\beta_f(x,\lambda)$.
\end{proof}

\section{Convexity induced by admissible functions}

For an admissible function $f \colon D \to X$, a subset $S\subseteq D$ is called \emph{$f$-convex} if, for all $n\in\N$, $(x,\lambda)\in S^n\times W_n$, we have that $\B_f(x,\lambda)\in S$. 

Observe that if $I \subset \R$ is an interval and $f=(f_1,f_2) \colon I \to \R^2$ satisfies conditions of Theorem~\ref{thm:f1f2} then the mean-value property for a Bajraktarević mean can be written as $\B_{f}(x,\lambda)\in J$ for every subinterval $J \subseteq I$, $n \in \N$ and $(x,\lambda)\in J^n \times W_n$. This yields that every subinterval of $I$ is $f$-convex.
Furthermore, in view of continuity of $f$, the converse implication is also valid.

The first assertion characterizes $f$-convexity in terms of standard convexity.

\begin{prop} \label{prop:fconv}
Let $f \colon D \to X$ be an admissible function. Then $S\subseteq D$ is $f$-convex if and only if $\cone(f(S))$ is convex.
\end{prop}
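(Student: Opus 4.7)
The plan is to use Lemma~\ref{lem:afy} as the bridge in both directions: it identifies $\sum_k\lambda_k f(x_k)$ with $\beta_f(x,\lambda)\,f(\B_f(x,\lambda))$, so the question of whether $\B_f(x,\lambda)$ lies in $S$ becomes the question of whether this vector sum lies on a ray through $f(S)$, i.e., in $\cone(f(S))$.

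For the forward implication, I would first note that $\cone(f(S))$ is automatically closed under multiplication by elements of $\R_+$, so convexity reduces to closure under binary convex combinations. Given two nonzero points $p=\lambda_1 f(x_1)$ and $q=\lambda_2 f(x_2)$ of $\cone(f(S))$, with $x_1,x_2\in S$ and $\lambda_1,\lambda_2>0$, and $t\in(0,1)$, the combination
\[
tp+(1-t)q=(t\lambda_1)\,f(x_1)+((1-t)\lambda_2)\,f(x_2)
\]
has both coefficients strictly positive, so by Lemma~\ref{lem:afy} it equals $\eta\,f(u)$ with $u=\B_f((x_1,x_2),(t\lambda_1,(1-t)\lambda_2))$ and $\eta>0$. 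By $f$-convexity of $S$, $u\in S$, so $tp+(1-t)q\in\cone(f(S))$. The cases $t\in\{0,1\}$ are immediate.

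For the backward implication, fix $(x,\lambda)\in S^n\times W_n$ and set $u:=\B_f(x,\lambda)$. Using the eliminative property, I may discard indices with $\lambda_k=0$ and assume all $\lambda_k>0$. Then $\sum_k\lambda_k f(x_k)$ is a positive multiple of a genuine convex combination of points of $f(S)$; since $\cone(f(S))$ is by assumption a convex set and is closed under positive scalings, a short induction on $n$ places this sum in $\cone(f(S))$. Hence there exist $s\in S$ and $\mu>0$ with
\[
\mu\,f(s)=\sum_{k=1}^n\lambda_k f(x_k)=\beta_f(x,\lambda)\,f(u),
\]
the second equality being Lemma~\ref{lem:afy}. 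Then $f(s)$ and $f(u)$ both lie on the same ray in $f(D)$, and observability of $f(D)$ (from admissibility) forces $f(s)=f(u)$; injectivity of $f$ gives $u=s\in S$.

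The main obstacle I anticipate is purely bookkeeping around the paper's convention that $\R_+$ denotes $(0,\infty)$ and that $\cone(\cdot)$ excludes the origin: in the backward direction one has to be careful to remove zero weights before invoking convexity of the cone, and in the forward direction one has to handle the trivial endpoints $t\in\{0,1\}$ separately. Once these conventions are respected, each direction collapses to a single appeal to Lemma~\ref{lem:afy} combined with observability.
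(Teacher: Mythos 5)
Your proof is correct and follows essentially the same route as the paper's: both directions hinge on identifying $\sum_k\lambda_k f(x_k)$ with a point on the ray through $f(\B_f(x,\lambda))$ (the paper phrases this via the ray projection $\pi_f$, you via Lemma~\ref{lem:afy}), then using $f$-convexity of $S$ in one direction and convexity plus positive homogeneity of $\cone(f(S))$ in the other. Your extra bookkeeping (binary combinations, zero weights, endpoint cases) is harmless but not a different argument.
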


\begin{proof}
Assume first that $S$ is $f$-convex. 
It suffices to show that for every $n \in \N$ and a pair  $(x,\lambda) \in S^n \times W_n$ we have $\xi:=\sum_{i=1}^n \lambda_if(x_i) \in \cone (f(S))$. 
But $\pi_f(\xi)=f(\B_f(x,\lambda))$, thus $\xi \in R_{f(\B_f(x,\lambda))}\subseteq \cone (f(S))$.

Conversely, if $\cone(f(S))$ is convex then, as it is positively homogeneous we obtain $r:=\sum_{i=1}^n \lambda_i f(x_i) \in \cone(f(S))$ for every $n \in \N$ and $(x,\lambda)\in S^n \times W_n$. By the definition of projection we obtain $\pi_f(r) \in f(S)$ and therefore $\B_f(x,\lambda)=f^{(-1)}(r)=f^{-1}\circ \pi_f(r) \in S$, which shows that $S$ is $f$-convex.
\end{proof}

Now we show that $f$-convex sets admit two very important properties of convex sets.
\begin{lem} 
Let $f \colon D \to X$ be an admissible function. Then the class of $f$-convex subsets of $D$ is closed with respect to intersection and chain union.
\end{lem}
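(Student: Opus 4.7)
My plan is to prove both closure properties directly from the definition of $f$-convexity, since the definition quantifies over finite samples $(x,\lambda)\in S^n\times W_n$ and both set operations behave well with respect to finite membership.

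For intersection, let $\{S_\alpha\}_{\alpha\in A}$ be an arbitrary family of $f$-convex subsets of $D$ and set $S:=\bigcap_{\alpha\in A}S_\alpha$. Pick $n\in\N$ and $(x,\lambda)\in S^n\times W_n$. Then for every $\alpha\in A$, the tuple $x$ lies in $S_\alpha^n$, so by the $f$-convexity of $S_\alpha$ we obtain $\B_f(x,\lambda)\in S_\alpha$. Since this holds for each $\alpha$, we get $\B_f(x,\lambda)\in S$, proving that $S$ is $f$-convex. This part is essentially bookkeeping.

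For chain union, let $\{S_\alpha\}_{\alpha\in A}$ be a chain of $f$-convex subsets (so the family is totally ordered by inclusion) and set $S:=\bigcup_{\alpha\in A}S_\alpha$. Fix $n\in\N$ and $(x,\lambda)\in S^n\times W_n$ with $x=(x_1,\dots,x_n)$. For each $i\in\{1,\dots,n\}$ there is an index $\alpha_i\in A$ such that $x_i\in S_{\alpha_i}$. Since the $S_{\alpha_i}$ form a finite subchain and every finite chain has a maximum element, there is some $i^*\in\{1,\dots,n\}$ such that $S_{\alpha_i}\subseteq S_{\alpha_{i^*}}$ for all $i$. Hence $x_1,\dots,x_n\in S_{\alpha_{i^*}}$, and the $f$-convexity of $S_{\alpha_{i^*}}$ gives $\B_f(x,\lambda)\in S_{\alpha_{i^*}}\subseteq S$.

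There is no serious obstacle here; the only subtle point is noting that we only need to locate finitely many sample points at a time, which is exactly what makes the chain hypothesis suffice (without it, the conclusion can fail for general unions). I would organize the write-up as two short paragraphs, one per closure property, with the finite-maximum observation explicitly stated for the chain case.
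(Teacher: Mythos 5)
Your proof is correct and follows essentially the same route as the paper's: for the intersection you apply $f$-convexity of each member separately, and for the chain union you use the finiteness of the sample $(x_1,\dots,x_n)$ to locate a single set of the chain containing all entries. The only difference is that you spell out the finite-subchain-has-a-maximum step, which the paper leaves implicit.
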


\begin{proof}
Let $\mathcal{S}$ be an arbitrary family of $f$-convex subsets of $D$. Fix $n \in \N$, and $(x,\lambda) \in \big(\bigcap \mathcal{S}\big)^n \times W_n$. 
Then for every $S \in \mathcal{S}$ we have $x \in S^n$ and, in view \mbox{$f$-convexity} of $S$,  we also have $\B_f(x,\lambda)\in S$.
Therefore $\B_f(x,\lambda) \in \bigcap \mathcal{S}$ which shows that $\bigcap \mathcal{S}$ is $f$-convex.

Now take a chain $\mathcal{Q}$ of $f$-convex sets.
Take $n \in \N$ and $(x,\lambda)\in (\bigcup \mathcal{Q})^n \times W_n$ arbitrarily. Then, by the chain property of $\mathcal{Q}$, there exists $Q \in \mathcal{Q}$ such that $x \in Q^n$. As $Q$ is $f$-convex, we obtain $\B_f(x,\lambda)\in Q\subseteq \bigcup \mathcal{Q}$. Whence $\bigcup \mathcal{Q}$ is $f$-convex subset of $D$, too. 
\end{proof}

Applying this lemma, for every admissible function $f \colon D \to X$ and $S \subset D$ we define 
\emph{$f$-convex hull} of $S$ as the smallest $f$-convex subset of $D$ containing $S$ and denote it by $\conv_f(S)$. 
In the next lemma we show that, similarly to the ordinary convex hull, this definition can be also expressed as a set of all possible combinations of elements in $S$. 

\begin{lem}
Let $f \colon D \to X$ be an admissible function and $S \subseteq D$. Then
\Eq{*}{
\conv_f(S)=\big\{\B_f(x,\lambda)\mid (x,\lambda)\in \WS\big\}.
} 
\end{lem}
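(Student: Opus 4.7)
The plan is to prove double inclusion after denoting the right-hand side by $T:=\{\B_f(x,\lambda)\mid (x,\lambda)\in \WS\}$. Since $\conv_f(S)$ is $f$-convex and contains $S$, for every $(x,\lambda)\in \WS$ we have $(x,\lambda)\in \conv_f(S)^n\times W_n$, so $\B_f(x,\lambda)\in \conv_f(S)$. This gives $T\subseteq \conv_f(S)$ at once. For the reverse inclusion I will show that $T$ is itself an $f$-convex set containing $S$, so minimality of $\conv_f(S)$ forces $\conv_f(S)\subseteq T$. The containment $S\subseteq T$ is immediate from reflexivity of $\B_f$: for $s\in S$ we have $s=\B_f(s,1)$.

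The substantive step is the $f$-convexity of $T$. Fix $n\in\N$ and $(y,\mu)\in T^n\times W_n$. By the definition of $T$, each coordinate $y_i$ can be written as $y_i=\B_f(x^{(i)},\lambda^{(i)})$ for some $(x^{(i)},\lambda^{(i)})\in \WS$; set $\nu_i:=\beta_f(x^{(i)},\lambda^{(i)})\in\R_+$. The goal is to exhibit an element $(z,\rho)\in\WS$ with $\B_f(z,\rho)=\B_f(y,\mu)$.

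The natural route is to apply the delegativity stated in Theorem~\ref{thm:subs} once for each index $i$, in reverse: since $(y_i,\mu_i)$ is precisely $(\B_f(x^{(i)},(\mu_i/\nu_i)\lambda^{(i)}),\beta_f(x^{(i)},(\mu_i/\nu_i)\lambda^{(i)}))$ by nullhomogeneity of $\B_f$ and homogeneity of $\beta_f$, the associated delegation formula \eq{ts} allows us to replace the entry $y_i$ (with weight $\mu_i$) in $\B_f(y,\mu)$ by the block $x^{(i)}$ (with weights $(\mu_i/\nu_i)\lambda^{(i)}$) without altering the value. Iterating over $i=1,\dots,n$ yields
\Eq{*}{
\B_f(y,\mu)=\B_f\bigl((x^{(1)},\dots,x^{(n)}),((\mu_1/\nu_1)\lambda^{(1)},\dots,(\mu_n/\nu_n)\lambda^{(n)})\bigr),
}
and the right-hand side is a Bajraktarevi\'c mean of entries from $S$ with positive weights, hence an element of $T$. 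Consequently $T$ is $f$-convex, which closes the argument.

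The main (mild) obstacle is the bookkeeping in the delegativity step: one must pick the scaling factor $\mu_i/\nu_i$ so that the effort of the expanded block matches $\mu_i$ exactly, which is why both the nullhomogeneity of $\B_f$ and the homogeneity of $\beta_f$ are invoked. Alternatively, one can bypass this iteration by invoking Corollary~\ref{cor:subs} or by arguing at the linear level using Lemma~\ref{lem:afy}, computing $\beta_f(y,\mu)f(\B_f(y,\mu))=\sum_i \mu_i f(y_i)=\sum_i(\mu_i/\nu_i)\sum_k \lambda^{(i)}_k f(x^{(i)}_k)$ and reading off $\B_f(y,\mu)\in T$ from the observability of $f$; either formulation completes the proof.
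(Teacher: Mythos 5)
Your proof is correct and follows essentially the same strategy as the paper's: prove the inclusion $T\subseteq\conv_f(S)$ trivially and then show that the right-hand side $T$ is an $f$-convex set containing $S$, the key step being the delegativity of the associated pair $(\B_f,\beta_f)$ together with the rescaling $\lambda^{(i)}\mapsto(\mu_i/\nu_i)\lambda^{(i)}$ that makes the block's effort equal to the prescribed weight $\mu_i$ (the paper packages this as Corollary~\ref{cor:subs} after first consolidating all the $y_i$ onto a common entry vector in $S^n$ via the reduction principle, whereas you iterate single-entry delegation from Theorem~\ref{thm:subs}; this is only a difference in bookkeeping). The one point to tidy is that when some $\mu_i=0$ the block weights $(\mu_i/\nu_i)\lambda^{(i)}$ vanish identically and delegation cannot be applied to that block — either discard such indices first by eliminativity, or use your alternative argument via Lemma~\ref{lem:afy}, where the concatenated weight vector remains admissible and the issue disappears.
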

\begin{proof}
Denote the set on the right-hand-side of the latter equality by $T$. The inclusion $T \subseteq \conv_f(S)$ is the obvious implication of $f$-convexity of the hull. 
To prove the converse inclusion, we need to show that $T$ is $f$-convex.

Let $m\in \N$, $y=(y_1,\dots,y_m)\in T^m$ and $\nu \in W_m$ be arbitrary. By the definition of $T$, in view of reduction priciple and Corollary~\ref{cor:subs} there exists $n \in \N$, a vector $x \in S^n$ and $\Lambda=(\lambda^{(1)},\dots,\lambda^{(n)}) \in W_n^m$ such that $y_i=\B_f(x,\lambda^{(i)})$ for all $i \in \{1,\dots,m\}$.
Now, Corollary~\ref{cor:subs} implies that there exists $\mu\in \R_+^m$ such that \eq{E:Lt} holds.
In particular for $t:=\big(\tfrac{\nu_i}{\mu_i}\big)_{i=1}^m \in W_m$, we get
\Eq{*}{
\B_f(y,\nu)=\B_f(y,(t_1\mu_1,\dots,t_m\mu_m))=\B_f(x,\Lambda t)\in T,
}
which shows that $T$ is $f$-convex, indeed.  
\end{proof}

We now establish the fundamental relationship between the $f$-convex hull in $D$ and the standard convex hull in $X$.

\begin{prop}
Let $f \colon D \to X$ be an admissible function and $S \subseteq D$. Then 
\Eq{convf}{
\conv_f(S)=f^{(-1)} \big( \conv(f(S)) \big).
}
\end{prop}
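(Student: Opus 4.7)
The plan is to combine the preceding lemma---which describes $\conv_f(S)$ as $\{\B_f(x,\lambda) \mid (x,\lambda) \in \WS\}$---with Lemma~\ref{lem:afy}, which identifies $\B_f(x,\lambda)$ with $f^{(-1)}\bigl(\sum_{k=1}^n \lambda_k f(x_k)\bigr)$. Since $f^{(-1)} = f^{-1} \circ \pi_f$ is constant along each ray, rescaling the weight vector by the positive factor $s := \lambda_1 + \dots + \lambda_n$ moves the sum along the same ray and does not change the $f^{(-1)}$-image. This is the bridge between nonnegative combinations (which naturally produce $\B_f$-values) and convex combinations (which are the elements of $\conv(f(S))$).

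For the inclusion $\conv_f(S) \subseteq f^{(-1)}(\conv(f(S)))$, I would take $z \in \conv_f(S)$, write $z = \B_f(x,\lambda)$ for some $(x,\lambda)\in S^n\times W_n$, and consider $w := \sum_{k=1}^n (\lambda_k/s) f(x_k)$, which is a genuine convex combination of elements of $f(S)$ and hence lies in $\conv(f(S))$. Since $w$ and $\sum_k \lambda_k f(x_k)$ lie on the same ray of $\cone(f(D))$, the map $f^{(-1)}$ sends both to $z$, so $z \in f^{(-1)}(\conv(f(S)))$.

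For the reverse inclusion, I would take $z \in f^{(-1)}(\conv(f(S)))$ and pick $w \in \conv(f(S))$ with $f^{(-1)}(w) = z$. Expressing $w = \sum_{k=1}^n \lambda_k f(x_k)$ as a convex combination with $x \in S^n$ and $\lambda \in W_n$, Lemma~\ref{lem:afy} applied to $(x,\lambda)$ reads off $z = f^{(-1)}(w) = \B_f(x,\lambda)$, so $z \in \conv_f(S)$ by the preceding lemma.

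The only non-routine point to check is that $f^{(-1)}$ is actually well-defined on all of $\conv(f(S))$, i.e., that $\conv(f(S)) \subseteq \cone(f(D))$. This follows from the conical convexity of $f(D)$: the set $\cone(f(D))$ is convex and contains $f(S)$, hence contains $\conv(f(S))$; admissibility also guarantees that $0$ does not appear, so the ray projection is defined everywhere needed. I do not anticipate a genuine obstacle---once the formula $\B_f = f^{(-1)}\circ(\text{nonnegative combination})$ and ray-invariance of $f^{(-1)}$ are in hand, the double inclusion reduces to the standard trick of normalizing weights to sum to one.
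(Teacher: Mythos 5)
Your argument is correct and follows essentially the same route as the paper: both directions rest on the description of $\conv_f(S)$ as $\{\B_f(x,\lambda)\mid(x,\lambda)\in\WS\}$ together with the ray-invariance of $f^{(-1)}$, your normalization by $s=\lambda_1+\dots+\lambda_n$ being exactly the paper's observation that $f^{(-1)}\big(\R_+\conv(f(S))\big)=f^{(-1)}\big(\conv(f(S))\big)$. Your explicit check that $\conv(f(S))\subseteq\cone(f(D))$, so that $f^{(-1)}$ is defined where needed, is a point the paper leaves implicit and is a welcome addition.
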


\begin{proof}

Observe that for every $y \in \conv_f(S)$ there exists $n \in \N$ and $(x,\lambda)\in S^n \times W_n$ such that $y=\B_f(x,\lambda)$. Then
\Eq{*}{
y=f^{(-1)} \bigg( \sum_{k=1}^n \lambda_kf(x_k) \bigg) \in f^{(-1)} \big( \R_+\conv(f(S)) \big)=
f^{(-1)} \big( \conv(f(S)) \big).
}
Conversely, for every $y \in f^{(-1)} \big( \conv(f(S)) \big)$ there exists $n \in \N$ and $(x,\lambda)\in S^n \times W_n$ with $\sum_{k=1}^n \lambda_k=1$ such that $y=f^{(-1)} \big(\sum_{k=1}^n\lambda_kf(x_k)\big)$. 
Then, by the definition of $\B_f$, we obtain $y=\B_f(x,\lambda)\in\conv_f(S)$ which completes the proof.
\end{proof}

\section{Equality of generalized Bajraktarevi\'c means}

Let us now state our main result which characterizes the equality of two generalized Bajraktarevi\'c means. In the one-dimensional case this is a classical result due to Acz\'el--Dar\'oczy \cite{AczDar63c} and Dar\'oczy--P\'ales \cite{DarPal82}.

\begin{thm}\label{lem:sufequ}
Let $X$ and $Y$ be linear spaces, $D$ be an arbitrary set and $f \colon D \to X$, $g \colon D \to Y$ be admissible functions. Then $\B_f=\B_g$ if and only if $g=A\circ f$ for some linear map $A\colon X\to Y$. 
\end{thm}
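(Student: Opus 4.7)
For the \emph{if} direction I would argue directly. Assume $g=A\circ f$ for a linear $A\colon X\to Y$. For arbitrary $(x,\lambda)\in\WD$, set $u:=\B_f(x,\lambda)$ and $\eta:=\beta_f(x,\lambda)$. Lemma~\ref{lem:afy} gives $\eta f(u)=\sum_{k=1}^n\lambda_k f(x_k)$; applying the linear map $A$ to both sides yields $\eta g(u)=\sum_{k=1}^n\lambda_k g(x_k)$. Invoking the uniqueness part of Lemma~\ref{lem:afy} for $g$ then shows $u=\B_g(x,\lambda)$, as desired. The whole substance lies in the converse direction.

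For the \emph{only if} direction, the plan is to build $A$ explicitly on $\cone(f(D))$ using the ray structure, extend linearly to $\Span(f(D))$, and then to $X$ by a Hamel basis argument. First I would invoke Corollary~\ref{cor:Bbeta} to obtain $\beta_f=\beta_g$, which I shall use throughout. Since $f$ is admissible, every $v\in\cone(f(D))$ lies on a unique ray $R_{f(x_v)}$ with $x_v\in D$ uniquely determined, and can be written uniquely as $v=\eta_v f(x_v)$ with $\eta_v>0$. I would then set
\[
A_0(v):=\eta_v\, g(x_v) \qquad (v\in\cone(f(D))),
\]
so that $A_0(f(z))=g(z)$ holds trivially for every $z\in D$.

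The heart of the argument is checking that $A_0$ is additive on $\cone(f(D))$ (positive homogeneity being obvious from the definition). For $v_i=\eta_i f(x_i)\in\cone(f(D))$ with $i=1,2$, I would apply Lemma~\ref{lem:afy} to the pair $\big((x_1,x_2),(\eta_1,\eta_2)\big)\in D^2\times W_2$ to rewrite
\[
v_1+v_2=\eta_1 f(x_1)+\eta_2 f(x_2)=\beta_f\big((x_1,x_2),(\eta_1,\eta_2)\big)\, f\big(\B_f\big((x_1,x_2),(\eta_1,\eta_2)\big)\big).
\]
Reading off $A_0(v_1+v_2)$ from the right-hand side using its defining formula, then substituting $\B_f=\B_g$ and $\beta_f=\beta_g$, and finally applying Lemma~\ref{lem:afy} once more (for $g$), I arrive at $A_0(v_1+v_2)=\eta_1 g(x_1)+\eta_2 g(x_2)=A_0(v_1)+A_0(v_2)$. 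This is precisely the step where both standing hypotheses enter together.

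With additivity and positive homogeneity of $A_0$ in hand, I would extend $A_0$ to a linear $\tilde A$ on $\Span(f(D))$ by declaring $\tilde A(v_1-v_2):=A_0(v_1)-A_0(v_2)$ (well-defined by additivity of $A_0$, with the convention $A_0(0):=0$), and then to a linear $A\colon X\to Y$ by extending a Hamel basis of $\Span(f(D))$ to one of $X$ and setting $A$ to $0$ on the new basis vectors. The identity $g=A\circ f$ is immediate from $A_0\circ f=g$. I expect the additivity of $A_0$ to be the only genuinely delicate point; the rest is routine linear-algebra bookkeeping enabled by the conical convexity of $f(D)$.
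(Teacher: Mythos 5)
Your proof is correct, and the ``if'' direction coincides with the paper's argument. For the converse, both proofs ultimately rest on the same transfer mechanism: Corollary~\ref{cor:Bbeta} gives $\beta_f=\beta_g$, and then Lemma~\ref{lem:afy} turns any identity $\eta f(u)=\sum_k\lambda_k f(x_k)$ with nonnegative weights into the corresponding identity for $g$. But you package this differently. The paper first establishes a kernel-transfer Claim ($\sum_i\lambda_i f(x_i)=0$ iff $\sum_i\lambda_i g(x_i)=0$ for \emph{signed} $\lambda$, via the $\lambda^{+}/\lambda^{-}$ splitting), then chooses a Hamel basis $H_f\subseteq f(D)$ of $\Span f(D)$, shows the corresponding family $H_g$ is a Hamel basis of $\Span g(D)$, defines $A$ on that basis and extends the identity $g=Af$ to all of $D$ by invoking the Claim again. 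You instead build $A_0$ directly on $\cone(f(D))$ by $A_0(\eta f(x)):=\eta g(x)$ (well defined by observability and injectivity of $f$), prove additivity --- which is exactly the transfer mechanism applied to a two-point system --- and then pass to $\Span(f(D))=\cone(f(D))-\cone(f(D))$ by formal differences (the cone is closed under addition thanks to conical convexity), finishing with a Hamel-basis extension to $X$. Your route yields $A$ constructively on $\Span(f(D))$ and avoids verifying that $H_g$ is independent and generating; the paper's route concentrates all the signed-coefficient bookkeeping in one Claim. The only points worth spelling out in your write-up are the well-definedness of $\tilde A$ on differences (the standard consequence of additivity of $A_0$, as you note) and that $\tilde A$ is genuinely $\R$-homogeneous rather than merely positively homogeneous; both are immediate. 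Incidentally, the paper only constructs $A$ on $X_0=\Span f(D)$, so your explicit final extension to all of $X$ actually makes the statement's ``$A\colon X\to Y$'' more honest.
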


\begin{proof}[Unverified alternative proof]
Take $n \in \N$, a pair $(x,\lambda)\in D^n\times W_n$, and denote briefly $y:=\B_f(x,\lambda)\in D$. By Lemma~\ref{lem:afy}, applying $A$ side-by-side in \eq{E:afy} and using the equality $g=A\circ f$ twice, we get
 \Eq{*}{
 \beta_f(x,\lambda) g(y)=A \big( \beta_f(x,\lambda) f(y)\big)=A \bigg( \sum_{k=1}^n \lambda_k f(x_k)\bigg)=
 \sum_{k=1}^n \lambda_k Af(x_k)=\sum_{k=1}^n \lambda_k g(x_k).
 }
Now the converse implication in Lemma~\ref{lem:afy} implies $y=\B_g(x,\lambda)$. As $x$ and $\lambda$ are arbitrary, we obtain the equality $\B_f=\B_g$.

To prove the converse implication assume that $\B_f=\B_g$. Then by Corollary~\ref{cor:Bbeta} we have $\beta_f=\beta_g$.
We shall use to following claim several times in the proof:

\begin{claim}
Let $n \in \N$ and $(x,\lambda)\in D^n\times \R^n$ .
Then 
\Eq{*}{
  \sum_{i=1}^n \lambda_i f(x_i)=0
\qquad \text{
if and only if} \qquad  
  \sum_{i=1}^n \lambda_i g(x_i)=0.
}
\end{claim}
\begin{proof}
For $\lambda\equiv 0$ the statement is trivial. From now on assume that $\lambda$ is a nonzero vector and define
\Eq{la+-}{
\lambda^+:=(\max(0,\lambda_i))_{i=1}^d,\qquad 
\lambda^-:=(\max(0,-\lambda_i))_{i=1}^d. 
}
Then $\lambda^+$ and $\lambda^-$ are disjointly supported with nonnegative entries, and $\lambda=\lambda^+-\lambda^-$. 

By the first equality we have 
\Eq{*}{
  \sum_{i=1}^n \lambda_i^+ f(x_i)=  \sum_{i=1}^n \lambda_i^- f(x_i).
}
Therefore as $\lambda$ is nonzero and $0\notin \conv f(D)$, we obtain that both $\lambda^-,\lambda^+ \in W_n$. Furthermore by the definition
$m:=\B_f(x,\lambda^-)=\B_f(x,\lambda^+)$ and $\mu:=\beta_f(x,\lambda^-)=\beta_f(x,\lambda^+)$. Therefore as $\B_f=\B_g$ and $\beta_f=\beta_g$ we obtain
\Eq{*}{
  \sum_{i=1}^n \lambda_i g(x_i)&=\sum_{i=1}^n \lambda_i^+ g(x_i)-\sum_{i=1}^n \lambda_i^- g(x_i)\\
  &=\beta_g(x,\lambda^+) g(\B_g(x,\lambda^+))-\beta_g(x,\lambda^-) g(\B_g(x,\lambda^-))
  =\mu g(m)-\mu g(m)=0.
}
The second implication is analogous.
\end{proof}

Denote the linear span of $f(D)$ and $g(D)$ by $X_0$ and $Y_0$, respectively. Let $H_f\subseteq f(D)$ be a Hamel base for $X_0$. Then one can choose a system of elements $\{x_\gamma\mid \gamma\in\Gamma\}\subseteq D$ such that $H_f=\{f(x_\gamma)\mid \gamma\in\Gamma\}$. 

We are now going to show that $H_g:=\{g(x_\gamma)\mid \gamma\in\Gamma\}$ is a Hamel base for $Y_0$. Indeed, for every collection of pairwise-distinct elements $\gamma_1,\dots,\gamma_d \in \Gamma$, the system $\{f(x_{\gamma_1}),\dots,f(x_{\gamma_d})\}$ is linearly independent and thus, by our Claim, so is $\{g(x_{\gamma_1}),\dots,g(x_{\gamma_d})\}$. 

To show that it is a Hamel base for $Y_0$, we have to prove that $H_g$ is also a generating system. If not, then there exists an element $x^*\in D$ such that $H_g\cup\{g(x^*)\}$ is linearly independent. Repeating the same argument (by interchanging the roles of $f$ and $g$) it follows that $H_f\cup\{f(x^*)\}$ is linearly independent, which contradicts that $H_f$ is a generating system.

As $H_f$ and $H_g$ are Hamel bases for $X_0$ and $Y_0$, respectively, there exists a unique linear mapping $A \colon X_0\to Y_0$ such that 
\Eq{defA}{
g(x)=Af(x)\qquad \text{for all }x \in D_\Gamma:=\{x_{\gamma}\mid \gamma\in\Gamma\}.
}

Our aim is to extend the latter equality to all $x \in D$. To this end, take $x \in D\setminus D_\Gamma$ arbitrarily. 
Using that $H_f$ is a Hamel base for $X_0$, we can find elements $\gamma_1,\dots,\gamma_d\in \Gamma$ and nonzero real numbers $\lambda_1,\dots,\lambda_d$ such that 
\Eq{*}{
  f(x)=\sum_{i=1}^d \lambda_i f(x_{\gamma_i}).
}
Then $\sum_{i=1}^d \lambda_i f(x_{\gamma_i})-f(x)=0$, and by our claim $\sum_{i=1}^d \lambda_i g(x_{\gamma_i})-g(x)=0$.
Finally we obtain
\Eq{*}{
  g(x)=\sum_{i=1}^d \lambda_i g(x_{\gamma_i})=\sum_{i=1}^d \lambda_i Af(x_{\gamma_i})=A\bigg(\sum_{i=1}^d \lambda_i f(x_{\gamma_i})\bigg)=Af(x).
}
Therefore \eq{defA} holds for all $x \in D$ which completes the proof.
\end{proof}

\section{Synergy}

Before we introduce the notion of synergy let us present some interpretation of the aggregated effort. The initial issue of coalitions in decision making theory (and, more general, theory of cooperation in games) is the problem how to measure the coalition quality. Intiuitively, synergy is the difference between the aggregated effort and the sum of the individual efforts, i.e., the arithmetic effort. For the detailed study of synergy, we refer the reader to \cite{ShuMoj12} and references therein.

\begin{exa}\label{ex:toy}
In a toy model we have three parties in a parliament with a total number of $100$ votes and three parties: Party~A ($\lambda_1$ votes), Party~B ($\lambda_2$ votes), Party~C ($\lambda_3$ votes). Assume that $\lambda_1\ge \lambda_2\ge\lambda_3$.
The are three possible coalitions AB, AC and BC. From the point of view of the dominant decision system (for example $\D_{FDD}$) each coalition above $50$ votes, is equivalent to the same number, the smallest majority which is $51$. Thus we have for all $x \in D^2$ and $i,j \in \{1,2,3\}$ with $i \ne j$,
\Eq{*}{
\alpha(\lambda)=\begin{cases}
                                100 & \sum_{i=1}^n \lambda_i\ge 51;\\
                                \sum_{i=1}^n \lambda_i  & \sum_{i=1}^n \lambda_i\le 50.
                               \end{cases}
} 
Now we could compare the sum of weights with the equivalent weight in two cases:

\noindent {\it Situation I:} $(\lambda_1,\lambda_2,\lambda_3)=(45,35,20)$:
\Eq{*}{
s_{AB}&=\alpha(\lambda_1,\lambda_2)-(\alpha(\lambda_1)+\alpha(\lambda_2))=\lambda_3=20;\\
s_{AC}&=\alpha(\lambda_1,\lambda_3)-(\alpha(\lambda_1)+\alpha(\lambda_3))=\lambda_2=35;\\
s_{BC}&=\alpha(\lambda_2,\lambda_3)-(\alpha(\lambda_2)+\alpha(\lambda_3))=\lambda_1=45;\\
s_{ABC}&=\alpha(\lambda_1,\lambda_2,\lambda_3)-(\alpha(\lambda_1)+\alpha(\lambda_2)+\alpha(\lambda_3))&=0.
}
Obviously, each party wants to be in a coalition. However $A$ prefers $C$ than $B$ (as $s_{AC}\ge s_{AB}$) but both $B$ and $C$ prefer to make a coalition with each other (as $s_{BC}\ge s_{AB}$ and $s_{BC}\ge s_{AC}$). Consequently the coalition $BC$ is the unique Nash equilibrium.

\noindent {\it Situation II:} $(\lambda_1,\lambda_2,\lambda_3)=(55,30,15)$:
\Eq{*}{
s_{AB}&=\alpha(\lambda_1,\lambda_2)-(\alpha(\lambda_1)+\alpha(\lambda_2))=-\lambda_2=-30;\\
s_{AC}&=\alpha(\lambda_1,\lambda_3)-(\alpha(\lambda_1)+\alpha(\lambda_3))=-\lambda_1=-15;\\
s_{BC}&=\alpha(\lambda_2,\lambda_3)-(\alpha(\lambda_2)+\alpha(\lambda_3))=0;\\
s_{ABC}&=\alpha(\lambda_1,\lambda_2,\lambda_3)-(\alpha(\lambda_1)+\alpha(\lambda_2)+\alpha(\lambda_3))=0.
}
Then $A$ does not want to make a coalition with either $B$ or $C$ (as the synergy is negative). Similarly neither $B$ nor $C$ wants to make a coalition with $A$ (this essentially follows from the real situation). The coalition $BC$ is irrelevant (which refers to the zero synergy).
\end{exa}

\bigskip

Obviously, as it was announced, the examples above are instrumental to dominant decision systems only. For more complicated decision making systems, we need to define the synergy in a different way.
In general, the synergy depends on the players' decisions. There are essentially two important assertions: 
\begin{enumerate}
 \item zero synergy refers to the situation when aggregation is irrelevant to the rest of the system;
 \item positive synergy should be profitable from the point of view of a decision making system.
\end{enumerate}

In our model, for an effort function $\E\colon \WD \to \R_+$, the $\E$-\emph{synergy} is a function $\sigma_\E\colon \WD \to \R$ defined as follows
\Eq{*}{
\sigma_\E(x,\lambda):=\E(x,\lambda)-(\lambda_1+\cdots+\lambda_n),\qquad n \in \N\text{ and }(x,\lambda) \in D^n \times W_n.
}
In other words, $\sigma_\E$ measures the difference between the given effort and the arithmetic effort, which is the sum of individual efforts. 

If $f\colon D\to X$ is an admissible function and $\E=\beta_f$, then $\sigma_\E$ will simply be denoted as $\sigma_f$. Furthermore, in view of Corollary~\ref{cor:Bbeta}, we can see that the synergy depends only on the mean $\B_f$, that is, the equality $\B_f=\B_g$ implies $\sigma_f=\sigma_g$. Therefore, we can define $\sigma_{\B_f}:=\sigma_f$. 

This property has an important interpretation in the theory of coalitional games. The case when the synergy is negative corresponds to the situation when there appear some distractions in the cooperation (see Example~\ref{ex:toy}.II). The case of positive synergy refers to the situation when the aggregated effort of the group is greater then sum of efforts of the individuals (see Example~\ref{ex:toy}.I). 

\begin{exa}\label{ex:hyperboloid}
Given a manifold $S:=\{(x,y,z)\mid x^2+y^2-z^2=-1 \wedge  z\ge 0\}\subseteq\R^3$ with a parametrization $f \colon \R^2 \to S$ given by $f(x,y)=(x,y,\sqrt{1+x^2+y^2})$. Then $S$ is observable, $f$ is admissible and the Bajraktarevi\'c-type mean $\B_f \colon \mathscr{W}(\R^2)\to \R^2$ is of the following form (here and below $n \in \N$ is fixed, $x,y\in\R^n$, and $\lambda \in W_n$) : 
\Eq{*}{
\B_f((x,y),\lambda)
&=f^{(-1)} \Big( \sum_{i=1}^n \lambda_i f(x_i,y_i) \Big)\\
&=f^{(-1)} \Big( \sum_{i=1}^n \lambda_i x_i,\sum_{i=1}^n \lambda_i y_i,\sum_{i=1}^n \lambda_i \sqrt{1+x_i^2+y_i^2} \Big).
 }
Now define 
\Eq{*}{
\Delta((x,y),\lambda)
:=\Big(\sum_{i=1}^n \lambda_i \sqrt{1+x_i^2+y_i^2}\Big)^2-\Big(\sum_{i=1}^n \lambda_i x_i\Big)^2-\Big(\sum_{i=1}^n \lambda_i y_i\Big)^2.
}
We can easily verify that 
\Eq{*}{
\frac1{\sqrt{\Delta((x,y),\lambda)}} \Big( \sum_{i=1}^n \lambda_i x_i,\sum_{i=1}^n \lambda_i y_i,\sum_{i=1}^n \lambda_i \sqrt{1+x_i^2+y_i^2} \Big) \in S,
 }
which yields
\Eq{*}{
\B_f((x,y),\lambda)
&=\Big( \frac1{\sqrt{\Delta((x,y),\lambda)}}\sum_{i=1}^n \lambda_i x_i,\frac1{\sqrt{\Delta((x,y),\lambda)}}\sum_{i=1}^n \lambda_i y_i \Big);\\
\beta_f((x,y),\lambda)&=\sqrt{\Delta((x,y),\lambda)}.
 }
Furthermore the inverse triangle (Minkowski's) inequality (for $\ell^{1/2}$) applied to the vectors $(\lambda_i^2)$, $(\lambda_i^2x_i^2)$, $(\lambda_i^2y_i^2)$ implies $\beta_f((x,y),\lambda) \ge \lambda_1+\dots+\lambda_n$ or, equivalently, $\sigma_f((x,y),\lambda) \ge 0$.
 
Observe that the above mean is not the standard convex combination of its arguments. Indeed, for the entries $(x,y)=((1,0),(0,1))$ and weights $\lambda=(1,1)$, we get $\B_f((x,y),\lambda)=(\frac{\sqrt6}6,\frac{\sqrt6}6)$, which obviously does not belong to the segment $\conv((0,1),(1,0))$.
\end{exa}

\subsection{Generalized quasi-arithmetic means}
In the next lemma we characterize the subfamily of zero-synergy generalized Bajraktarevi\'c means. Its single variable counterpart was proved in \cite{Pas1910}.

\begin{thm}\label{lem:degen}
Let $f \colon D \to X$ be an admissible function. Then the following conditions are equivalent:
\begin{enumerate}[(i)]
\item $f(D)$ is a convex set and 
\Eq{E:flat}{
\B_f(x,\lambda)=f^{-1} \bigg(\frac{\sum_{i=1}^n \lambda_i f(x_i)} {\sum_{i=1}^n\lambda_i}\bigg)\qquad \text{for all }n \in \N \text{ and }(x,\lambda)\in D^n \times W_n,
}
in particular the right hand side is well-defined for all such pairs;
  \item $\sigma_f\equiv 0$;
 \item \label{BfAss} $\B_f$ is associative, that is, 
 \Eq{iii}{
 \B_f\big((x,y),(\lambda,\mu)\big)
 =\B_f\big(\big(\B_f(x,\lambda),y\big),(\alpha(x,\lambda),\mu) \big)
 }
 for all pairs $(x,\lambda),(y,\mu)\in \WD$ (where $\alpha\colon\WD\to\R_+$ stands for the arithmetic effort function);
 \item Equality \eq{iii} holds for all $(x,\lambda)\in\WD$ and $(y,\mu)\in D\times\R_+$.
\end{enumerate}
\end{thm}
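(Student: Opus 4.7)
The plan is to establish the cycle $(\text{i})\Leftrightarrow(\text{ii})\Rightarrow(\text{iii})\Rightarrow(\text{iv})\Rightarrow(\text{ii})$, with everything pivoting on the defining identity
\[
\beta_f(x,\lambda)\,f(\B_f(x,\lambda))=\sum_{k=1}^n \lambda_k f(x_k)
\]
from Lemma~\ref{lem:afy} together with the delegation formula of Theorem~\ref{thm:subs}.

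First, I would treat the equivalence of (i) and (ii) as a direct unfolding of the identity above. Under (ii), $\beta_f(x,\lambda)=\lambda_1+\dots+\lambda_n$, so dividing through yields $f(\B_f(x,\lambda))=\frac{\sum\lambda_kf(x_k)}{\sum\lambda_k}$; letting $\lambda$ range over the simplex shows that every convex combination of points of $f(D)$ belongs to $f(D)$, establishing convexity of $f(D)$ and at the same time giving~\eqref{E:flat}. Conversely, assuming (i), multiplying the formula of~\eqref{E:flat} (after applying $f$) by $\sum\lambda_k$ produces a second representation of $\sum\lambda_k f(x_k)$ as a positive scalar times a value of $f$, and the uniqueness clause of Lemma~\ref{lem:afy} forces $\beta_f(x,\lambda)=\sum\lambda_k$, i.e.\ $\sigma_f\equiv 0$.

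Next, (ii)$\Rightarrow$(iii) comes straight from delegativity: Theorem~\ref{thm:subs} asserts that~\eqref{ts} holds with $(y_0,\mu_0)=(\B_f(y,\mu),\beta_f(y,\mu))$, and under (ii) the effort aggregate $\beta_f(y,\mu)$ collapses to $\alpha(y,\mu)$, turning~\eqref{ts} into~\eqref{iii}. The step (iii)$\Rightarrow$(iv) is tautological, since (iv) is just~\eqref{iii} for $m=1$.

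The main obstacle is the last implication (iv)$\Rightarrow$(ii). For arbitrary $(x,\lambda)\in\WD$, set $m:=\B_f(x,\lambda)$, $a:=\alpha(x,\lambda)$, and $b:=\beta_f(x,\lambda)$. Pairing~\eqref{iii} in the one-element form (iv) against Theorem~\ref{thm:subs} would yield
\[
\B_f\bigl((m,y),(a,\mu)\bigr)=\B_f\bigl((m,y),(b,\mu)\bigr)\qquad(y\in D,\ \mu\in\R_+).
\]
Applying Lemma~\ref{lem:afy} to both sides places the vectors $a\,f(m)+\mu f(y)$ and $b\,f(m)+\mu f(y)$ on a single ray of $X$. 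Selecting any $y\neq m$ (the case $|D|=1$ is trivial, since reduction already forces $\beta_f=\alpha$), admissibility of $f$ forces $f(m)$ and $f(y)$ to be linearly independent by the very argument used at the start of the proof of Theorem~\ref{thm:det}: otherwise the two images would either generate a common ray (violating observability) or place the origin on the segment between them (contradicting conical convexity). Comparing the coefficients of $f(m)$ on the two sides then gives $a=b$, hence $\sigma_f(x,\lambda)=0$. The only non-routine pieces are this linear-independence step and the packaging of (iv) against delegativity; everything else is bookkeeping.
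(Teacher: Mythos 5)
Your proposal is correct and follows essentially the same route as the paper: the crux in both is the implication out of (iv), handled identically by playing the one-element associativity against delegativity (Theorem~\ref{thm:subs}), invoking Lemma~\ref{lem:afy} to put $a\,f(m)+\mu f(y)$ and $b\,f(m)+\mu f(y)$ on one ray, and using the linear independence of $f(m)$ and $f(y)$ (from observability plus conical convexity) to force $C=1$ and $a=b$. The only difference is cosmetic — you close the cycle at (ii) and prove (i)$\Leftrightarrow$(ii) separately, whereas the paper runs (iv)$\Rightarrow$(i) directly by continuing the same computation to get convexity of $f(D)$; both organizations are sound.
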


\begin{proof} If $D$ is a singleton, then all of the above conditions are satisfied. Therefore, we may assume that $D$ has at least two distinct elements.  

The implications $(i) \Rightarrow (ii)$, $(ii) \Rightarrow (iii)$, and $(iii) \Rightarrow (iv)$ are easy to check. To prove the implication $(iv) \Rightarrow (i)$, assume that $\B_f$ satisfies \eq{iii} for all $(x,\lambda)\in\WD$ and $(y,\mu)\in D\times\R_+$.

Fix $n \in \N$ and a pair $(x,\lambda) \in D^n \times W_n$. We denote briefly $\bar x:=\B_f(x,\lambda)$, $\bar\lambda:=\beta_f(x,\lambda)$ and $\bar\alpha:=\alpha(x,\lambda)$. Then we have that $\sum_{i=1}^n \lambda_i f(x_i)=\bar\lambda f(\bar x)$.

Now fix $y \in D \setminus\{\bar x\}$ and $\mu>0$. Applying the delegativity of $\B_f$ and condition (iv), we have 
\Eq{*}{
\B_f\big((\bar x,y),(\bar\lambda,\mu)\big)
=\B_f\big((x,y),(\lambda,\mu)\big)
=\B_f\big((\bar x,y),(\bar\alpha,\mu)\big).
}
Consequently $\bar\lambda f(\bar x)+\mu f(y)$ and $\bar\alpha f(\bar x)+\mu f(y)$ are on the same ray, i.e., there exists a constant $C>0$ such that 
\Eq{*}{
C \cdot \big(\bar\lambda f(\bar x)+\mu f(y)\big)
=\bar\alpha f(\bar x)+\mu f(y),
}
which reduces to
\Eq{*}{
0=(C\bar\lambda-\bar\alpha)f(\bar x)+\mu (C-1)f(y).
}
As $y \ne \bar x$, the admissibility implies that $f(y)$ and $f(\bar x)$ are linearly independent. Consequently, the above equality implies $C=1$ and $\bar\lambda=\bar\alpha$. Thus, 
\Eq{*}{
  \frac{\sum_{i=1}^n \lambda_i f(x_i)}{\sum_{i=1}^n \lambda_i}
  =\frac{\sum_{i=1}^n \lambda_i f(x_i)}{\bar\alpha}
  =\frac{\sum_{i=1}^n \lambda_i f(x_i)}{\bar\lambda}
  =f(\bar x)\in f(D),
}
which implies that $f(D)$ is a convex set. Finally, applying $f^{-1}$ side-by-side we get that \eq{E:flat} holds.
\end{proof}

\subsection{Gini means}
We are now going to calculate the sign of the synergy for Gini means. Before we go into the details, we recall a few properties of this family. First, it is easy to observe that $\G_{p,q}=\G_{q,p}$ for all $p,q \in \R$. Furthermore, in a case $q=0$, the Gini mean $\G_{p,0}$ equals the $p$-th Power mean (in particular it is associative).  These means are monotone with respect to their parameters, more precisely, for all $p,q,r,s \in \R$, we have that $\G_{p,q}\le \G_{r,s}$ if and only if $\min(p,q)\le \min(r,s)$ and $\max(p,q)\le \max(r,s)$ (cf.\ \cite{DarLos70}).  Finally, a Gini mean $\G_{p,q}$ is monotone as a mean (in each of its argument) if and only if $pq \le 0$ (see \cite{Los71a,Los71b}). We show that the sign of $pq$ is also important in characterizing the sign of their synergy.

\begin{prop}
   Sign of the synergy of the Gini mean $\G_{p,q}$ coincides with that of $-pq$.
   More precisely, for all $p,q \in \R$, $n\in \N$, nonconstant vector $x\in \R_+^n$ and $\lambda \in \R_+^n$, we have
    $\sign\big(\sigma_{\G_{p,q}}(x,\lambda)\big)=-\sign(pq)$.
\end{prop}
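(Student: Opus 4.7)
My plan is to reduce the sign of the synergy to a convexity property of the log-moment-generating function associated with $x$ and $\lambda$. Set $\Lambda:=\lambda_1+\dots+\lambda_n$ and $\mu_i:=\lambda_i/\Lambda$, and introduce
\Eq{*}{
g(t):=\ln\Big(\sum_{i=1}^n \mu_i x_i^t\Big) \qquad (t\in\R).
}
Viewing $g$ as the log-moment-generating function of $\ln X$, where $X$ takes the value $x_i$ with probability $\mu_i$, we see that $g$ is smooth and convex with $g(0)=0$; because $x$ is nonconstant on the support of $\mu$, the function $g$ is in fact strictly convex.

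The key identity, obtained by a direct simplification of the formulas for $\gamma_{p,q}$ in \eqref{E:Ginipq}, is that for $p\ne q$,
\Eq{*}{
\ln\frac{\gamma_{p,q}(x,\lambda)}{\Lambda}=\frac{p\,g(q)-q\,g(p)}{p-q},
}
and the right-hand side is precisely the value at $t=0$ of the secant line of $g$ through the points $(p,g(p))$ and $(q,g(q))$. Since $\sigma_{\G_{p,q}}(x,\lambda)=\gamma_{p,q}(x,\lambda)-\Lambda$, the sign of the synergy matches the sign of this secant value relative to $g(0)=0$.

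Next I would distinguish cases according to the position of $0$ relative to $\{p,q\}$. If $pq<0$, then $0$ lies strictly between $p$ and $q$, so strict convexity places the secant strictly above the graph of $g$ at $t=0$, giving $\sigma_{\G_{p,q}}>0$, in agreement with $-\sign(pq)=+1$. If $pq>0$ and $p\ne q$, then $0$ lies outside the interval with endpoints $p$ and $q$, and strict convexity now forces the secant strictly below the graph of $g$ at $t=0$, yielding $\sigma_{\G_{p,q}}<0=-\sign(pq)$. For the coincident case $p=q\ne 0$, the companion formula for $\gamma_{p,p}$ (in the display right after \eqref{E:Ginipq}) gives analogously
\Eq{*}{
\ln\frac{\gamma_{p,p}(x,\lambda)}{\Lambda}=g(p)-p\,g'(p),
}
which is the value at $t=0$ of the tangent line to $g$ at $p$; strict convexity places this strictly below $g(0)=0$, so $\sigma_{\G_{p,p}}<0=-\sign(p^2)$.

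Finally, the degenerate case $pq=0$ is handled separately: if exactly one of $p,q$ vanishes, then $\G_{p,q}$ is a power mean, and if $p=q=0$ it is the geometric mean; in each of these situations $\G_{p,q}$ is a quasi-arithmetic mean whose generating function has convex image, so Theorem~\ref{lem:degen} gives $\sigma_{\G_{p,q}}\equiv 0 = -\sign(pq)$. The main technical point I expect to require care is the coincident case $p=q$: the $p\ne q$ analysis is a single chord-versus-graph statement, but the coincident case rests on a structurally distinct explicit formula for $\gamma_{p,p}$ and requires the tangent-line analogue, so it deserves a separate direct verification rather than being read off as a limiting case.
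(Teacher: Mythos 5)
Your argument is correct, and it takes a genuinely different route from the paper's. The paper proceeds by cases: for $pq=0$ it invokes associativity and Theorem~\ref{lem:degen}; for $p\neq q$ with $pq\neq0$ it manipulates the explicit formula for $\gamma_{p,q}$ until the question becomes the strict comparison of the power means $\big(\varphi_q/\varphi_0\big)^{1/q}$ and $\big(\varphi_p/\varphi_0\big)^{1/p}$; and for $p=q\neq0$ it reduces the sign of the synergy to the strict comparison of the Gini means $\G_{p,p}$ and $\G_{p,0}$, relying on the Daróczy--Losonczi comparison theorem. You instead encode everything in the single strictly convex function $g(t)=\ln\big(\sum_i\mu_i x_i^t\big)$ with $g(0)=0$: the identity $\ln\big(\gamma_{p,q}(x,\lambda)/\Lambda\big)=\frac{p\,g(q)-q\,g(p)}{p-q}$ (which checks out, since $\varphi_t=\Lambda e^{g(t)}$) exhibits the log-synergy as the value at $t=0$ of the chord of $g$ through $p$ and $q$, and the tangent-line formula $g(p)-pg'(p)$ handles $p=q$. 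The sign then falls out of whether $0$ lies inside or outside the chord interval, with the $pq=0$ boundary giving exactly $0$. What this buys is a unified treatment in which the diagonal case is the honest tangent limit of the off-diagonal chord case, and the power-mean comparison is internalized (it is itself equivalent to convexity of $g$) rather than cited; the paper's version is more elementary in that it uses only the stated formulas and known comparison results. Both arguments need the same strictness input, namely that $x$ is nonconstant with strictly positive weights, which for you gives $g''>0$. Two small points of care: your statement that $g$ is strictly convex deserves the one-line justification that $g''(t)$ is the variance of $\ln X$ under the tilted weights, which is positive for nonconstant $x$; and the chains ``$\sigma<0=-\sign(pq)$'' should be read as two separate assertions about the sign rather than an equality of numbers, a purely notational slip.
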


\begin{proof}
 If $pq=0$, then $\G_{p,q}$ is associative and thus Lemma~\ref{lem:degen} implies $\sigma_{\G_{p,q}}\equiv 0$. From now on assume that $pq\ne 0$. 
 Fix $n \in \N$, $\lambda \in \R_+^n$ and nonconstant vector $x\in \R_+^n$. Let 
 \Eq{*}{
 \varphi_p:=\lambda_1x_1^p+\cdots+\lambda_nx_n^p\quad\text{ and }\quad \psi_p:=\lambda_1x_1^p\ln(x_1)+\cdots+\lambda_nx_n^p\ln(x_n)\quad (p \in \R).
} 
 Assume first that $p \ne q$.  As $\gamma_{p,q}=\gamma_{q,p}$, without loss of generality, we can assume that $p>q$. Then by \eq{E:Ginipq}, we have  
 \Eq{*}{
 \gamma_{p,q}(x,\lambda)=\frac{(\lambda_1 x_1^q+\dots+\lambda_n x_n^q)^{\frac{p}{p-q}}}
  {(\lambda_1 x_1^p+\dots+\lambda_n x_n^p)^{\frac{q}{p-q}}}=\bigg(\frac{\varphi_q^p}{\varphi_p^q}\bigg)^{\frac1{p-q}}.
 }
Whence by the definition 
\Eq{*}{
\sigma_{\G_{p,q}}(x,\lambda)=\gamma_{p,q}(x,\lambda)-(\lambda_1+\cdots+\lambda_n)=\bigg(\frac{\varphi_q^p}{\varphi_p^q}\bigg)^{\frac1{p-q}}-\varphi_0.
}
In view of the inequality $p >q$, we obtain
\Eq{*}{
\sigma_{\G_{p,q}}(x,\lambda) > 0 \iff \bigg(\frac{\varphi_q^p}{\varphi_p^q}\bigg)^{\frac1{p-q}}>\varphi_0 \iff \varphi_q^p > \varphi_0^{p-q} \varphi_p^q \iff \bigg(\frac{\varphi_q}{\varphi_0}\bigg)^p > \bigg(\frac{\varphi_p}{\varphi_0}\bigg)^q.
}

For $pq<0$ we obtain that $\sigma_{\G_{p,q}}(x,\lambda) > 0$ is equivalent to $\big(\tfrac{\varphi_q}{\varphi_0}\big)^{1/q}< \big(\tfrac{\varphi_p}{\varphi_0}\big)^{1/p}$. But the last inequality is just the equality between power means. Thus we have $\sigma_{\G_{p,q}}(x,\lambda) > 0$ whenever $pq<0$.

If $pq>0$ then $\sigma_{\G_{p,q}}(x,\lambda) > 0$ is equivalent to $\big(\tfrac{\varphi_q}{\varphi_0}\big)^{1/q} > \big(\tfrac{\varphi_p}{\varphi_0}\big)^{1/p}$. But, as $p>q$ we know that the converse inequality holds. So in this case we obtain $\sigma_{\G_{p,q}}(x,\lambda) < 0$.

In the last case, when $p=q\ne0$, we have
\Eq{*}{
\sigma_{\G_{p,p}}(x,\lambda)=\gamma_{p,p}(x,\lambda)-(\lambda_1+\dots+\lambda_n)=\varphi_p \exp\Big(\frac{-p\psi_p}{\varphi_p}\Big)-\varphi_0.
}
Therefore
\Eq{*}{
\sigma_{\G_{p,p}}(x,\lambda) < 0 \iff \varphi_p \exp\Big(\frac{-p\psi_p}{\varphi_p}\Big)-\varphi_0<0 \iff \exp\Big(\frac{-p\psi_p}{\varphi_p}\Big) < \frac{\varphi_0}{\varphi_p}.
}
We can apply the strictly decreasing mapping $\R_+ \ni \xi \mapsto \sign(p)\xi^{-1/p}$ side-by-side to obtain
\Eq{*}{
\sigma_{\G_{p,p}}(x,\lambda) < 0 &\iff \sign(p) \exp\Big(\frac{\psi_p}{\varphi_p}\Big) > \sign(p) \Big(\frac{\varphi_p}{\varphi_0}\Big)^{1/p}\\
&\iff \sign(p) \G_{p,p}(x,\lambda) >\sign(p) \G_{p,0}(x,\lambda).
}
But the inequality on the right-hand-side holds for all $p\in \R \setminus\{0\}$, which completes the proof.
\end{proof}

\begin{xrem}
 Observe that Gini mean has a positive synergy if the graph of $\gamma_{p,q}$ is hyperbolic and negative for parabolic graphs. In the case of hyperboloid in Example~\ref{ex:hyperboloid} the synergy was also positive.  
\end{xrem}


\end{document}